\newtheorem{theorem}{Theorem} \newtheorem{prop}[theorem]{Proposition}
\newtheorem{lemma}[theorem]{Lemma} \newtheorem{defi}{Definition}
 \newtheorem{rmk}[theorem]{Remark}
\newcommand{\eps}{\varepsilon} 
\newcommand{\indic}[1]{\mathbf{1}_{\{#1\}}}
\DeclareMathOperator\var{var}
\DeclareMathOperator\cov{{cov}}
\newcommand{\half}{\frac1{2}}
\newcommand{\keywords}{\textbf{Keywords}\ }
\def\mn{\medskip\noindent}
\def\bn{\bigskip\noindent}
\def\beq{\begin{equation}}
\def\eeq{\end{equation}}
\def\beqa{\begin{eqnarray}}
\def\eeqa{\end{eqnarray}}
\def\beqax{\begin{eqnarray*}}
\def\eeqax{\end{eqnarray*}}
\def\sqz{\kern -0.2em}
\def\cA{\mathcal A}
\def\cC{\mathcal C}
\def\cS{\mathcal S}
\def\E{\mathbb{E}}
\def\P{\mathbb{P}}
\def\R{\mathbb{R}}
\def\gg{|\Gamma|}
\def\g{\Gamma}
\def\nc{\bar N}
\begin{document}

\title{Emergence of giant cycles and slowdown transition in random transpositions and $k$-cycles}

\author{Nathana\"el Berestycki$^{1}$}
\date{April 2010}
\maketitle

\centerline{\textbf{Abstract}}

\mn Consider the random walk on the permutation group
obtained when the step distribution is uniform on a given conjugacy class. It is shown that there is a critical time at which two phase transitions occur simultaneously. On the one hand, the random walk slows down abruptly (i.e., the acceleration drops from 0 to $-\infty$ at this time as $n \to \infty$). On the other hand, the largest cycle size changes from microscopic to giant. The proof of this last result is both considerably simpler and more general than in a previous result of Oded Schramm (2005) for random transpositions. It turns out that in the case of random $k$-cycles, this critical time is proportional to $1/[k(k-1)]$, whereas the mixing time is known to be proportional to $1/k$.

\bn  \noindent \keywords{random transpositions, random $k$-cycles, random permutations, cycle percolation, coalescence-fragmentation, random hypergraphs, conjugacy class, mixing time}

\vfill
\bn 1. Statistical Laboratory, Cambridge University. Wilberforce Rd., Cambridge CB3 0WB.

\newpage

\bn
\section{Introduction}

\subsection{Basic result}

Let $n\ge 1$ and let $\mathcal{S}_n$ be the group of permutations of $\{1,\ldots, n \}$. Consider the random walk on $\mathcal{S}_n$ obtained by performing random transpositions in continuous time, at rate 1. That is, let $\tau_1, \ldots$ be a sequence of i.i.d. uniformly chosen transpositions among the $n(n-1)/2$ possible transpositions of the set $V=\{1,\ldots, n\}$, and for all $t\ge 0$, set
$$
\sigma_t = \tau_1 \cdot \ldots \cdot \tau_{N_t}
$$
where $(N_t,t\ge 0)$ is an independent Poisson process with rate 1. It is well-known that the permutation $\sigma_t$ is approximately a uniform random permutation (in the sense of total variation distance) after time $(1/2) n \log n$ (see \cite{d-sh}). In particular, this means that at this time, most points belong to cycles which are of macroscopic size $O(n)$, while initially, in the permutation $\sigma_0$ which is the identity permutation, every cycle is microscopic (being of size 1). How long does it take for macroscopic cycles to emerge? Oded Schramm, in a remarkable paper \cite{schramm}, proved that the first giant cycles appear at time $n/2$. More precisely, answering a conjecture of David Aldous stated in \cite{bd}, he was able to prove that if $t=cn$ with $c>1/2$, then there exists a (random) set $W\subset \{1,\ldots, n\}$ satisfying $\sigma_t(W) = W$, such that $|W| \sim \theta n$ where $0<\theta = \theta(c)<1$, and furthermore,
the cycle lengths of $\sigma_t|_W$, rescaled by $\theta n$, converges in the sense of finite-dimensional distributions towards a Poisson-Dirichlet random variable. (The Poisson-Dirichlet distribution describes the limiting cycle distribution of a uniform random permutation and will be described in more details below). In particular, this implies that $\sigma_t$ contains giant cycles with high probability. On the other hand it is furthermore easy to see that no macroscopic cycle can occur if $c<1/2$.
His proof is separated into two main steps. The first step consists in showing that giant cycles do emerge prior to time $cn$ when $c>1/2$. The second step is a beautiful coupling argument which shows that once giant cycles exist they must quickly come close to equilibrium, thereby proving Aldous' conjecture. Of these two steps, the first is arguably the most technically involved. Our main purpose in this paper is to give an elementary and transparent new proof of this fact.  Let $\Lambda(t)$ denote the size of the largest cycle of $\sigma_t$. For $\delta>0$, define
\begin{equation}\label{taudelta}
\tau_\delta =\inf\{t\ge 0: \Lambda(t) > \delta n\} .
\end{equation}

\begin{theorem}
\label{T:perctransp}
For any $c>1/2$ then $\tau_\delta <cn$ with high probability, where $$\delta = \frac{\theta(c)^2}8>0.$$
\end{theorem}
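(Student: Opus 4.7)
The plan is to exploit the standard coupling between the random transposition walk and the Erdős--Rényi random graph. Let $G_t$ denote the (multi)graph on $[n]$ whose edges are $\tau_1,\ldots,\tau_{N_t}$. Since $N_t$ concentrates at $cn$, the underlying simple graph of $G_t$ is asymptotically $\mathcal{G}(n,2c/n)$, so for $c>1/2$ it contains, with high probability, a unique giant component of size $(\theta(c)+o(1))n$, where $\theta(c)$ is the survival probability of a Poisson$(2c)$ Galton--Watson tree (satisfying $1-\theta = e^{-2c\theta}$). The elementary but crucial observation is that every cycle of $\sigma_t$ is contained in a connected component of $G_t$.

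I would attack the theorem through a second-moment argument on $Y:=\sum_i |C_i|^2$, where $C_1,C_2,\ldots$ enumerate the cycles of $\sigma_t$. Since $|C_i|\le \Lambda(t)$ and $\sum_i |C_i|=n$, we have $\Lambda(t)\ge Y/n$, so it suffices to show $Y/n^2\ge \theta(c)^2/8$ with high probability. For independent uniform vertices $x,y\in [n]$, $\E[Y]/n^2 = \P(x\sim_\sigma y)$, where $\sim_\sigma$ means ``belong to the same cycle of $\sigma_t$''. Decomposing on whether $x,y$ lie in the giant $\mathcal{C}_1$ of $G_t$,
$$\P(x\sim_\sigma y)\ \ge\ \P(x,y\in\mathcal{C}_1)\cdot \P(x\sim_\sigma y \mid x,y\in\mathcal{C}_1),$$
and the first factor converges to $\theta(c)^2$. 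To upgrade this first-moment estimate to a high-probability statement, I would run the standard second-moment calculation for four i.i.d.\ uniform vertices $x_1,\ldots,x_4$: with high probability their $G_t$-neighbourhoods are asymptotically disjoint, and given the multiset of transpositions the order in which they are applied is exchangeable, from which one extracts $\E[(Y/n^2)^2]\to \P(x\sim_\sigma y)^2$. Chebyshev then yields $Y/n^2\ge \theta(c)^2/8$ with high probability, provided the conditional probability in the display is at least $1/2$ (say).

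The main obstacle is precisely that conditional probability $\P(x\sim_\sigma y \mid x,y\in\mathcal{C}_1)$: one must show it is bounded below by a constant, without invoking the very Poisson--Dirichlet result one is trying to build up to. A natural strategy is to condition on the edge multiset of $G_t$ so that the order of transpositions within $\mathcal{C}_1$ is uniformly random, and then analyse the induced coalescence--fragmentation dynamics on the cycle partition of $\mathcal{C}_1$: a uniformly chosen transposition inside $\mathcal{C}_1$ merges two cycles with probability $1-\sum_i(|C_i|/|\mathcal{C}_1|)^2$ and splits one otherwise, so whenever no cycle dominates, merging wins and a macroscopic cycle must emerge after $\Theta(|\mathcal{C}_1|)$ steps. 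Alternatively, a sprinkling argument, splitting $[0,cn]$ into $[0,t_1]\cup [t_1,cn]$ with $t_1$ chosen so that $G_{t_1}$ already has a nearly-maximal giant, uses the independent transpositions in $[t_1,cn]$ within $\mathcal{C}_1$ to merge any two fixed vertices of the giant into a common cycle with positive probability. Either route replaces Schramm's delicate analysis of the cycle structure, and is where the real work of the theorem lies.
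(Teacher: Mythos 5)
Your proposal sets up the same coupling with the Erd\H{o}s--R\'enyi graph $G_t$ and uses the same elementary observation that cycles of $\sigma_t$ refine the partition into components of $G_t$, but from there your strategy diverges from the paper's and, more importantly, it has a genuine gap. You reduce the theorem to the claim that $\P(x\sim_\sigma y\mid x,y\in\mathcal{C}_1)$ is bounded below by a constant, and you acknowledge yourself that ``this is where the real work of the theorem lies.'' That conditional probability is essentially the assertion that the giant component of $G_t$ carries a macroscopic cycle, i.e.\ it is a rephrasing of the statement to be proved, not a reduction of it. The two routes you sketch for attacking it --- analysing the coalescence--fragmentation dynamics of cycle sizes inside $\mathcal{C}_1$, or a sprinkling argument --- are precisely the delicate analyses (in the spirit of Schramm's original paper) that this theorem is designed to circumvent, and neither is carried out. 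As written, the proposal does not constitute a proof.

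The paper's actual argument sidesteps the need for any such conditional estimate by \emph{counting fragmentations over a short time window} instead of tracking cycle sizes directly. Fix $t_1=t_2-n^{3/4}$, $t_2=cn$. Since mergers of $G_t$-components occur precisely when neither endpoint lies in the giant, $\bar N(t_1)-\bar N(t_2)\sim (t_2-t_1)(1-\theta(c)^2)$. A separate elementary bound (excess cycles come from fragmentations with a small piece, which have probability $O(n^{-1/2})$ per step) shows $\sup_{t\le cn}|N(t)-\bar N(t)|=O(n^{1/2})$ in expectation, which is negligible next to $n^{3/4}$. But $N(t)$ moves by $-1$ at each coalescence and $+1$ at each fragmentation, so writing the net change as $\text{Poisson}(t_2-t_1)-2F(I)$ forces the number of fragmentations $F(I)$ on $I=[t_1,t_2]$ to be $\sim \tfrac12(t_2-t_1)\theta(c)^2$. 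If, however, no cycle exceeded $\delta n$ throughout $I$, each step would produce a fragmentation with probability at most $\delta$, hence $F(I)\le 2\delta(t_2-t_1)$ with high probability; choosing $\delta=\theta(c)^2/8$ makes this incompatible with the previous estimate. This is what makes the paper's proof both elementary and robust: it never needs a lower bound on $\P(x\sim_\sigma y\mid x,y\in\mathcal{C}_1)$, only a count of components and a count of fragmentations. You may find it worthwhile to compare: your second-moment set-up on $Y=\sum_i|C_i|^2$ is natural and would indeed give $\Lambda(t)\ge Y/n$, but it front-loads the hard part; the paper extracts the same conclusion from a purely combinatorial bookkeeping identity.
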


This proof is completely elementary and in particular requires almost no estimate. As a consequence, it is fairly robust and it can be hoped that it extends to further models. We illustrate this by applying it to more general random walks on $\mathcal{S}_n$, whose step distribution is uniform on a given conjugacy class of the permutation group (definitions will be recalled below). We show that the emergence of giant cycles coincides with a phase transition in the speed of the random walk, as measured by the derivative of the distance (with respect to the graph metric) between the position of the random walk at time $t$, and its starting point. This phase transition in the speed is the analogue of the phase transition described in \cite{bd} for random transpositions.

\medskip We mention that Theorem \ref{T:perctransp} is the mean-field analogue of a question arising in statistical mechanics in the study of Bose condensation and the quantum ferromagnetic Heisenberg model (see T\`oth \cite{toth}). Very few rigorous results are known about this model on graphs with non-trivial geometry, with the exception of the work of Angel \cite{angel} for the case of a $d$-regular tree with $d$ sufficiently large. We believe that the proof of Theorem \ref{T:perctransp} proposed here opens up the challenging possibility to prove analogous results on graphs that are ``sufficiently high-dimensional" such as a high-dimensional hypercube, for which the percolation picture has recently started to emerge: see, e.g., Borgs et al. \cite{cube}.

\subsection{Random walks based on conjugacy classes.}
\label{S:conj}
Fix a number $k\ge 2$, and call an element $\gamma \in \mathcal{S}_n$ a $k$-cycle, or a cyclic permutation of length $k$, if there exist pairwise distinct
elements $x_1,\ldots,x_k \in \{1,\dots, n\}$ such that
$\gamma(x)=x_{i+1}$ if  $x=x_i$ (where $1\le i\le k$ and $x_{k+1} := x_1$) and
$ \gamma(x) = x $ otherwise.
Thus for $k=2$, a 2-cycle is simply a transposition. If $\sigma$ is a permutation then $\sigma$ can be decomposed into a product of cyclic permutations $\sigma = \gamma_1 \cdot \ldots \cdot \gamma_r$ where $\cdot$ stands for the composition of permutations. (This decomposition being unique up to the order of the terms). A conjugacy class $\Gamma \subset \cS_n$ is any set that is invariant by conjugacy $\sigma \mapsto \pi^{-1} \sigma \pi$, for all $\pi \in \cS_n$. It easily seen that a conjugacy class of $\cS_n$ is exactly a set of permutations having a given cycle structure, say $(k_2, \ldots, k_J)$, i.e., consisting of $k_2$ cycles of size 2, $\ldots$, $k_J$ cycles of size $J$ in their cycle decomposition (and a number of fixed points which does not need to be explicitly stated). Note that if $\Gamma$ is a fixed conjugacy class of $\cS_n$, and $m>n$, $\g$ can also be considered a conjugacy class of $\cS_m$ by simply adding $m-n$ fixed points to any permutation $\sigma \in \g$.

\medskip Let $\g$ be a fixed conjugacy class, and consider the random walk in continuous time on $\mathcal{S}_n$
where the step distribution is uniform on $\g$. That is, let $(\gamma_i,i\ge 1)$ be an i.i.d. sequence of elements uniformly distributed on $\g$, and let $(N_t, t\ge 0)$ be an independent rate 1 Poisson process. Define a random
process:
\begin{equation}
\sigma_t:=\gamma_1\cdot \ldots \cdot \gamma_{N_t}, \ \ t\ge 0,
\end{equation}
where $\cdot$ stands for the composition of two permutations. Thus the case where $\g$ consists only of transpositions (i.e. $k_2 =1$ and $k_j =0$ if $j\ge 2$) corresponds to the familiar random process on $\mathcal{S}_n$ obtained by performing random transpositions in continuous time, and the case where $\g$ contains only one nontrivial cycle of size $k\ge 2$ will be referred to as the random $k$-cycles random walk. The process $(\sigma_t, t \ge 0)$ may conveniently be viewed as a random walk on $G_n$, the Cayley graph of $\mathcal{S}_n$ generated by $\g$. Note that if $\gg = \sum_{j=2}^J jk_j$ is even, the graph $G_n$ is connected but it is not when $\gg$ is odd: indeed, in that case, the product of random $p$-cycles must be an even permutation, and thus $\sigma_t$ is then a random walk on the alternate group $\mathcal{A}_n$ of even permutations. This fact will be of no relevance in what follows.

\medskip In this paper we study the pre-equilibrium behaviour of such a random walk. Our main result in this paper for this process is that there is a phase transition which occurs at time $t_c n$, where
\begin{equation}\label{tc}
t_c = \left(\sum_{j=2}^J j(j-1) k_j\right)^{-1}.
\end{equation}
This transition concerns two distinct features of the walk. On the one hand, giant cycles emerge at time $t_c n$ precisely, as in Theorem \ref{T:perctransp}. On the other hand, the speed of the walk changes dramatically at this time, dropping below 1 in a non-differentiable way. We start with the emergence of giant cycles, which is analogue to Theorem \ref{T:perctransp}. Recall the definition of $\tau_\delta$ in \eqref{taudelta}.

\begin{theorem}\label{T:perc2}
Let $t<t_c$. Then there exists $\beta>0$ such that no cycle is greater than $\beta \log n$ with high probability. On the other hand for any $t>t_c$ there exists $\delta>0$ such that $\tau_\delta < tn$ with high probability.
\end{theorem}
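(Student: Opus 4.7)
The plan is to reduce both statements to a phase transition in an associated random hypergraph. At each jump time of the Poisson process, decompose the applied $\gamma_i$ into its nontrivial cycles and add the vertex support of each such cycle as a hyperedge to a growing graph $G_s$ on $\{1,\ldots,n\}$. The key observation is that any cycle of $\sigma_s$ lies within a single connected component of $G_s$: tracing $\sigma_s=\gamma_1\cdots\gamma_{N_s}$ on a vertex moves the current element only along an edge of some $\gamma_i$, hence along an edge of $G_s$. The probability that a fixed pair $\{a,b\}$ is joined at a given step is $\sum_j k_j j(j-1)/(n(n-1))$, so at time $s=tn$ the expected degree of a fixed vertex in $G_{tn}$ equals $t/t_c$, and a breadth-first exploration of a component is dominated above and below by Galton--Watson trees of mean offspring $t/t_c$.

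For the subcritical statement ($t<t_c$), this Galton--Watson is subcritical, so its total progeny has exponential tails; a union bound over the $n$ possible starting vertices implies that every component of $G_{tn}$, and therefore every cycle of $\sigma_{tn}$, has size at most $\beta\log n$ with high probability, for a suitable $\beta=\beta(t/t_c)$.

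For the supercritical statement ($t>t_c$), the plan is to mirror the proof of Theorem~\ref{T:perctransp}. The same exploration yields a unique giant component in $G_{tn}$ of size $\sim\theta(t)n$, where $\theta(t)>0$ is the survival probability of the associated branching process. This only bounds the largest $\sigma_{tn}$-cycle from above; to produce a cycle of matching size one proceeds by a size-biasing/second-moment argument. Pick two independent uniform vertices $u,v$ and, conditional on both lying in the giant component of $G_{tn}$, show that they share a $\sigma_{tn}$-cycle with positive probability. This is achieved by a joint exploration that simultaneously builds the components of $u$ and $v$ in $G_{tn}$ and traces their orbits under $\sigma_{tn}$, exploiting the symmetry of the two-sided exploration to conclude that $\tau_\delta<tn$ with $\delta$ a positive multiple of $\theta(t)^2$.

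The main obstacle, relative to the case of transpositions, is that applying a general element of $\Gamma$ can change the number of cycles of the current permutation by any value in $\{-(|\Gamma|-r),\ldots,|\Gamma|-r\}$, where $r$ is the number of nontrivial cycles of $\gamma_i$, rather than by exactly $\pm 1$. This precludes the clean signed-martingale identity available for transpositions, so a more robust combinatorial bookkeeping must be put in its place. Nevertheless, the containment of $\sigma_s$-cycles in $G_s$-components is model-independent and the joint exploration argument survives intact, so the only real cost of the generalisation is that the explicit constant $\theta(c)^2/8$ of Theorem~\ref{T:perctransp} is replaced by an analogous expression depending on $(k_2,\ldots,k_J)$.
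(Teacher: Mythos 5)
Your subcritical half coincides with the paper's: couple the walk to the random hypergraph process $(H_t)$ by adding a hyperedge on the support of each nontrivial cycle applied, observe that every cycle of $\sigma_t$ is contained in a component of $H_t$, and bound all components by a subcritical branching process to get $\beta\log n$. This is exactly Lemma~\ref{L:giant} plus the containment remark.

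Your supercritical half, however, has a genuine gap and is not the paper's route. You correctly notice that the containment of $\sigma$-cycles in $H$-components gives only an upper bound on cycle size, and you propose to close the gap by a ``joint exploration'' showing that two uniform vertices $u,v$ in the giant component of $H_{tn}$ share a $\sigma_{tn}$-cycle with positive probability. But no mechanism for this is given: the orbit of a point under $\gamma_1\cdots\gamma_{N_{tn}}$ is a complicated function of the full history, and there is no reason visible in your sketch why the orbit of $u$ should reach $v$. This positive-probability shared-cycle claim is precisely the hard step in Schramm's original argument~\cite{schramm}, which the paper explicitly sets out to avoid. As written, the proposal replaces the hard step by a name, not a proof.

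The paper's supercritical argument is indirect and does not examine orbits at all. It tracks the \emph{number} of cycles $N(t)$ of $\sigma_t$ against the number of components $\bar N(t)$ of $H_t$. Lemma~\ref{L:clusters2} shows $N-\bar N = o(n^{3/4})$ uniformly (small fragmentations are rare), and Lemma~\ref{L:clusters} shows $\bar N(tn)/n$ tends to a deterministic function whose slope is $-K\bigl(1-\theta(t)^2\bigr)$, which is strictly greater than $-K$ as soon as $t>t_c$. Since each step decreases $N$ by at most $K$, with equality exactly when every transposition in the step coalesces, this sublinear drop forces a macroscopic fraction of fragmentation events over $[t_1,t_2]$. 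But a fragmentation occurs only when both endpoints of a transposition sit in the same cycle, which happens at a nontrivial rate only if some cycle has macroscopic size; hence $\tau_\delta<tn$ with the explicit $\delta$ of Lemma~\ref{L:large}. Your remark about the ``main obstacle'' (a $\Gamma$-step can change the cycle count by more than $\pm1$) is a side issue, handled in the paper simply by decomposing each step into $K=\sum_j(j-1)k_j$ transpositions; it is not where the real difficulty lies.
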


We now state our result for the speed. Denote by $d(x,y)$ the graph distance between two vertices $x,y \in \mathcal{S}_n$, and for $t\ge 0$, let $$d(t)=d(o,\sigma_t).$$ where $o$ is the identity permutation of $\mathcal{S}_n$. Recall that a sequence of random functions $X_n(t)$ converge uniformly on compact sets of $S \subset \R$ in probability (\emph{u.c.p.} for short) towards a random function $X(t)$ if $\P(\sup_{t \in S, t \le T} | X_n(t) - X(t)| > \eps) \to 0$ as $n \to\infty$ for all $\eps>0$ and $T>0$.

\begin{theorem} \label{3-cycle} Fix a constant integer $J\ge 2$ and constant nonnegative integers $k_2, \ldots, k_J$, and consider the conjugacy class $\g$ of $\cS_n$ defined by $(k_2, \ldots, k_J)$. Let $t_c$ be as in \eqref{tc}, and fix $t>0$. Then there exists a compact interval $I \subset (t_c, \infty)$, and a nonrandom function $\varphi(t)$ satisfying $\varphi(t) = t$ for $t\le t_c$ and $\varphi(t) < t$ for $t>t_c$, such that
\begin{equation}
  \label{ugen}
  \frac1n d(tn) \longrightarrow \varphi(t), \ \ \ \ t \in \R\setminus I
\end{equation}
uniformly on compact sets in probability as $n \to \infty$.
Furthermore $\varphi$ is $\cC^\infty$ everywhere except at $t=t_c$, where the acceleration satisfies $u''(t_c^+) =-\infty$. In the case of random $k$-cycles $(k\ge 2$), $I=\emptyset$ so the convergence holds uniformly on compact sets in $\R$.
\end{theorem}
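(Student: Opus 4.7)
The plan is to split the analysis at $t=t_c$ and combine the two halves of Theorem \ref{T:perc2} with a drift-type computation at the level of $d(\sigma_{tn})$.

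\emph{Pre-critical regime $t \le t_c$.} The upper bound $d(\sigma_{tn}) \le N_{tn} = tn(1+o(1))$ is trivial (the walk itself is a witnessing word). For the matching lower bound, note that any $\gamma \in \g$ changes the number of cycles $K(\sigma)$ of a permutation by at most $\gg - r$ in either direction, where $r = \sum_{j=2}^J k_j$ is the number of non-trivial cycles of an element of $\g$, giving the abelianization bound
\[
 d(\sigma) \;\ge\; \frac{n - K(\sigma)}{\gg - r}.
\]
By the first half of Theorem \ref{T:perc2}, every cycle of $\sigma_{tn}$ has size at most $\beta\log n$ for $t<t_c$, so the $\gg$ support points of a freshly drawn $\gamma$ fall into $\gg$ distinct cycles of $\sigma_{tn}$ with probability $1-O(\log n/n)$, in which case the multiplication decreases $K$ by the maximum $\gg-r$. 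A union bound over the $\Theta(n)$ steps yields $K(\sigma_{tn}) = n - (\gg-r)N_{tn} + o(n)$, and hence $d(\sigma_{tn}) \ge tn + o(n)$. This gives $\varphi(t) = t$ for $t \le t_c$.

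\emph{Post-critical regime $t>t_c$.} Let $\alpha(t)$ denote the asymptotic probability that applying a fresh $\gamma\in\g$ to $\sigma_{tn}$ decreases $d$ rather than increases it by $1$. Since each step changes $d$ by exactly $\pm 1$ and steps arrive at rate $n$, the drift identity $\varphi'(t) = 1 - 2\alpha(t)$ holds in the fluid-limit sense. The second half of Theorem \ref{T:perc2} guarantees a cycle of size $\ge \delta n$ for $t>t_c$, which forces $\alpha(t) > 0$: with probability bounded away from $0$ the support of $\gamma$ lands in the giant cycle in a configuration that triggers a ``split'' and shortens $d$. Integrating the drift therefore gives $\varphi(t) < t$ strictly for $t > t_c$.

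\emph{Smoothness and the acceleration singularity.} For $t$ sufficiently past $t_c$, a Schramm-style coupling identifies the cycle profile of $\sigma_{tn}$ on the giant set with a Poisson--Dirichlet configuration, making $\alpha(t)$ a smooth functional of the coarse-grained mass $\theta(t)$, which itself satisfies a smooth coalescence--fragmentation ODE; this gives $\varphi \in \cC^\infty$ on $(t_c,\infty)\setminus I$, with $I=\emptyset$ in the pure $k$-cycle case where the coupling can be carried down to $t_c^+$. The main obstacle is $\varphi''(t_c^+)=-\infty$: via $\varphi'(t) = 1-2\alpha(t)$ this is equivalent to $\alpha'(t_c^+) = +\infty$, i.e.\ a singular scaling $\alpha(t) \sim C(t-t_c)^{\gamma}$ with some exponent $\gamma\in(0,1)$ as $t\downarrow t_c$. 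Making this rigorous will require a finer analysis of the cycle structure in the near-critical window, inheriting the singular emergence of the giant cycle at the percolation threshold, and is the key technical challenge.
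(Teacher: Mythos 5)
Your proposal correctly isolates the abelianization lower bound $d(\sigma)\ge \frac{1}{K}(n-N(\sigma))$ (with $K=\gg-r=\sum(j-1)k_j$), and the pre-critical computation that nearly every step drops the cycle count by the maximal amount $K$ is close in spirit to what the paper does. But there are several genuine gaps in the rest.

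First and most importantly, the post-critical ``drift identity'' $\varphi'(t)=1-2\alpha(t)$ is a heuristic, not a proof: it presupposes the very fluid limit you are trying to establish, and it is never used to \emph{compute} $\alpha(t)$. Showing $\alpha(t)>0$ only gives $\varphi(t)<t$; it does not identify the limit, let alone show one exists. The paper avoids this circularity entirely: it proves convergence of the rescaled \emph{cluster count} of the coupled hypergraph, $\frac1n\bar N(tn)\to u(t)=1-K\int_0^t(1-\theta(s)^2)\,ds$ (Lemma \ref{L:clusters}), transfers this to the cycle count via $|N-\bar N|=o(n)$ (Lemma \ref{L:clusters2}), and thereby gets the explicit formula $\varphi(t)=\int_0^t(1-\theta(s)^2)ds$, from which the $\cC^\infty$ regularity away from $t_c$ falls out because $\theta$ is smooth there---no Schramm/Poisson--Dirichlet machinery is needed or used. (Also note your claim ``each step changes $d$ by exactly $\pm1$'' requires the Cayley graph to be bipartite, which fails when $\g\subset\cA_n$.)

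Second, you only address the lower bound on $d$; you never touch the matching upper bound $d(\sigma_{tn})\le\varphi(t)n+o(n)$, which is the genuinely hard half of the theorem. The paper proves it by an explicit path construction: decompose $\sigma$ into ``good'' cycles (coming from hypertree components, destructible by reversing their hyperedge history) and ``bad'' cycles (merged into one reservoir cycle in $o(n)$ moves), then show one can peel everything off at rate exactly $K$ per step, using the bad reservoir to absorb the arithmetic mismatch when different hyperedge sizes are present. The interval $I$ in the theorem arises precisely from the possibility that this reservoir runs out of mass for some $t>t_c$ when $\g$ has several distinct nontrivial cycle lengths---not, as you speculate, from any breakdown of a coupling near $t_c$. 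Without a construction of this kind you cannot conclude equality in \eqref{ugen}, only an inequality.

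Finally, you yourself flag that $\varphi''(t_c^+)=-\infty$ is unproven in your sketch, and your proposed mechanism ($\alpha(t)\sim C(t-t_c)^\gamma$ with $\gamma\in(0,1)$) would need to be extracted from the explicit formula for $\varphi$; since you never derive that formula, there is no route to this claim in your plan. In summary, the pre-critical lower bound is fine, but the identification of the limit $\varphi$, the upper bound on $d$, the origin of the interval $I$, and the singularity of $\varphi$ at $t_c$ all remain open in your approach.
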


\begin{rmk}\emph{We believe that $I=\emptyset$ in all cases, but our proof only guarantees this in the case of random $k$-cycles and a few other cases which we have not tried to describe precisely. Roughly speaking there is a combinatorial problem which arises when we try to estimate the distance to the identity in the case of conjugacy classes which contain several non-trivial cycles of distinct sizes (particularly when these are coprime). This is explained in more details in the course of the proof. Right now, the current result is enough to prove that there is a phase transition for $d(tn)$ when $t=t_c$, but does not prevent other phase transitions after that time.}
\end{rmk}

In the case of random $k$-cycles, we have $t_c = 1/[k(k-1)]$ and the function $\varphi$ has the following explicit expression:
\begin{equation} \label{D:phi}
\varphi(t):=
\displaystyle 1-\sum_{s=0}^{\infty} \frac{((k-1)s+1)^{s-2}}{s!} (kt)^s e^{-kt(s(k-1)+1)}
\end{equation}
It is a remarkable fact that for $t\le t_c$ a cancellation takes place and $\varphi(t) = t$. The case $k=2$ of random transpositions matches Theorem 4 from \cite{bd}.
In the general conjugacy class case, $\varphi$ may be described as the solution to a certain differential equation. For $t\ge 0$ and $z\in [0,1]$, let $G_t(z) = \exp(-\gg t + t\sum_{j=2}^J jk_j z^{j-1})$, and let
$\rho = \rho(t)$ be the smallest solution of the equation (in $z$): $G_t(z)=z$. Then $\varphi$ is defined by 
\begin{equation}\label{ode}
\varphi(t) = \int_0^t 1-\theta(s)^2ds.
\end{equation}
It is a fact that $\theta(t)>0$ if and only if $t> t_c$, which explains why $\varphi(t)= t$ for $t \le t_c$ and $\varphi(t) < t$ for $t> t_c$.

\subsection{Heuristics}
\label{S:discussion}

The $k$-cycle random walk is a simple generalization of the random transpositions random walk on $\mathcal{S}_n$, for which the phase transition in Theorem \ref{3-cycle} was proved in \cite{bd}. Observe that any $k$-cycle $(x_1, \ldots, x_k)$ may always be written as the product of $k-1$ transpositions:
$$
(x_1, \ldots, x_k)= (x_1, x_2) \ldots (x_{k-1}, x_k)
$$
This suggests that, qualitatively speaking, the $k$-cycle random walk should behave as ``random transpositions speed up by a factor of $(k-1)$", and thus one might expect that phase transitions occur at a time that is inversely proportional to $k$. This is for instance what happens with the mixing time
\begin{equation} \label{mix}
t_{\text{mix}}=\frac1k n \log n
\end{equation}
 for the total variation distance. (This was recently proved in \cite{bsz} and was already known for $k\le 6$, the particular case $k=2$ being the celebrated Diaconis-Shahshahani theorem \cite{d-sh}); see \cite{LPW} and \cite{diaconis} for an excellent introduction to the general theory of mixing times, and \cite{lsc} in particular for mixing times of random walks on groups). It may therefore come as a surprise that $t_c = 1/[k(k-1)]$ rather than $t_c = 1/k$. As it emerges from the proof, the reason for this fact is as follows. We introduce a coupling of $(\sigma_t,t\ge 0)$ with a random hypergraph process $(H_t,t\ge 0)$ on $V=\{1, \ldots, n\}$, which is the analogue of the coupling between random transpositions and Erd\H{o}s-Renyi random graphs introduced in \cite{bd}. As we will see in more details, hypergraphs are graphs where
edges (or rather \emph{hyperedges}) may connect several vertices
at the same time. In this coupling, every time a cycle $(x_1, \ldots, x_k)$ is performed in the random walk, $H_t$ gains a hyperedge connecting $x_1, \ldots, x_k$. This is essentially the same as adding the complete graph $K_k$ on $\{x_1, \ldots, x_k\}$ in the graph $H_t$. Thus the degree of a typical vertex grows at a speed which is $k(k-1)/2 $ faster than in the standard Erd\H{o}s-Renyi random graph. This results in a giant component occurring $k(k-1)/2$ faster as well. This explains the formula $t_c^{-1} = k(k-1)$, and an easy generalisation leads to (\ref{tc}).

\medskip \textbf{Organisation of the paper:} The rest of the paper is organised as follows. We first give the proof of Theorem \ref{T:perctransp}. In the following section we introduce the coupling between $(\sigma_t,t\ge 0)$ and the random hypergraph process $(H_t,t\ge 0)$.
In cases where the conjugacy class is particularly simple (e.g. random $k$-cycles), a combinatorial treatment analogous to the classical analysis of the Erd\H{o}s-Renyi random graph is possible, leading to exact formulae. In cases where the conjugacy class is arbitrary, our method is more probabilistic in nature and the formulae take a different form ($H_t$ is then closer to the Molly and Reed model of random graphs with prescribed degree distribution, \cite{molloy-reed1} and \cite{molloy-reed2}). The proof is thus slightly different in these two cases (respectively dealt with in Section \ref{S:hypergraphs} and \ref{S:proof-conj}), even though conceptually there are no major differences between the two cases.

\section{Emergence of giant cycles in random transpositions}

In this section we give a full proof of Theorem \ref{T:perctransp}. As the reader will observe, the proof is really elementary and is based on well-known (and easy) results on random graphs. Consider the random graph process $(G_t,t\ge 0)$ on $V=\{1, \ldots, n\}$ obtained by putting an edge between $i$ and $j$ if the transposition $(i,j)$ has occurred prior to time $t$. Then every edge is independent and has probability $p_t = 1- e^{-t/{n \choose 2}}$, so $G_t$ is a realisation of the Erd\H{o}s-Renyi random graph $G(n,p_t)$.

For $t \ge 0$ and $i \in V$, let $C_i$ denote the cycle that contains $i$. Recall that if $C_i = C_j$ then a transposition $(i,j)$ yields a fragmentation of $C_i=C_j$ into two cycles, while if $C_i \neq C_j$ then the transposition $(i,j)$ yields a coagulation of $C_i$ and $C_j$. It follows from this observation that every cycle of $\sigma_t$ is a subset of one of the connected components of $G_t$. Thus let $N(t)$ be the number of cycles of $\sigma_t$ and let $\bar N(t)$ denote the number of components of $G_t$. Then we obtain
\begin{equation}
  N(t) \ge \bar N(t), \ \ \ t\ge 0.
\end{equation}
Now it is a classical and easy fact that the number $\bar N(t)$ has a phase transition at time $n/2$ (corresponding to the emergence of a giant component at this time). More precisely, let $\theta(c)$ be the asymptotic fraction of vertices in the giant component at time $cn$, so $\theta(c)$ is the survival probability of a Poisson Galton-Watson process with mean offspring $2c$ (in particular $\theta(c) = 0 $ if $c<1/2$).

Let $c>1/2$ and fix an interval of time $[t_1, t_2]$ such that $t_2 = cn$ and $t_1= t_2 - n^{3/4}$. Our goal will be to prove that a cycle of size $\delta n$ occurs during the interval $I=[t_1,t_2]$, where $\delta = \theta(c)^2/8$.

\begin{lemma} \label{L:ncRG} As $n \to \infty$,
$$
 \bar N(t_1) - \bar N(t_2) \sim (t_2 - t_1) [1- \theta^2(c)]
$$
in the sense that the ratio of these two quantities tends to 1 in probability.
\end{lemma}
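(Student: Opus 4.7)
The plan is to track, transposition by transposition over $I = [t_1, t_2]$, how many of them reduce $\bar N$. An added edge $\{i,j\}$ decreases $\bar N$ by exactly one if and only if $i$ and $j$ lie in distinct components of the current graph (a \emph{coagulation event}), so
\[
\bar N(t_1) - \bar N(t_2) = \sum_{k \,:\, T_k \in I} \indic{A_k} + \text{(error)},
\]
where $T_1 < T_2 < \cdots$ denote the transposition times and $A_k$ is the event that the $k$-th transposition is a coagulation in $G_{T_k^-}$. The number of transpositions falling in $I$ is Poisson with mean $t_2 - t_1 = n^{3/4}$, and the expected number of \emph{repetitions} (transpositions picking an edge already present in $G_{t_1}$) is $O(n^{3/4}\cdot cn/\binom{n}{2}) = O(n^{-1/4})$, hence negligible.

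Given $G_s$ for $s \in I$, the conditional probability that the next transposition coagulates is
\[
p_{\text{coag}}(s) = 1 - \frac{1}{n(n-1)} \sum_C |C|(|C|-1),
\]
the sum running over connected components $C$ of $G_s$. Classical supercritical Erd\H{o}s--R\'enyi estimates give that the giant component contributes $\theta(s/n)^2 + o(1)$ with high probability, while the finite-susceptibility bound $\sum_{\text{small}} |C|^2 = O(n)$ contributes $O(1/n)$. Since $|I| = n^{3/4} = o(n)$ and $\theta$ is continuous at $c$, monotonicity of the giant in $t$ together with pointwise fluctuation estimates at the endpoints of $I$ yield $\theta(s/n) = \theta(c) + o(1)$ uniformly in $s \in I$ with high probability. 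Hence $p_{\text{coag}}(s) = 1 - \theta(c)^2 + o(1)$ uniformly in $s$.

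Concentration is established by a martingale argument. The compensated process
\[
M(t) = \sum_{T_k \le t,\, T_k \in I} \indic{A_k} - \int_{t_1}^{t \wedge t_2} p_{\text{coag}}(s) \, ds
\]
is a mean-zero martingale whose jumps are bounded by $1$ and whose expected number of jumps is $O(n^{3/4})$. Hence $\E[M(t_2)^2] = O(n^{3/4})$, and Chebyshev gives $|M(t_2)| = o(n^{3/4})$ in probability. Combined with the deterministic estimate $\int_{t_1}^{t_2} p_{\text{coag}}(s)\,ds = (t_2 - t_1)(1 - \theta(c)^2)(1 + o(1))$, this yields the claimed equivalence.

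The main obstacle is the \emph{uniform}-in-$s$ control of the giant component size throughout $I$, since a pointwise estimate is not enough: we integrate over $s$ before applying concentration. Fortunately $|I| = o(n)$ is much shorter than the scale $n$ on which the giant evolves, so the giant can change by at most $O(n^{3/4})$ vertices over $I$; the uniform bound therefore reduces to the classical $\sqrt{n}$-fluctuation estimate for the giant component at the two endpoints $t_1$ and $t_2$.
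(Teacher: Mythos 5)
Your proposal is correct and follows essentially the same route as the paper: count the Poisson number of added edges during $I$, observe that an edge coagulates iff its endpoints lie in distinct components (which, because the second-largest component is negligible and the giant is monotone in $t$ and of relative size $\theta(c) + o(1)$ throughout $I$, happens with conditional probability $1-\theta(c)^2 + o(1)$ uniformly in $s$), and apply a law of large numbers. You simply spell out the concentration step via the compensated-martingale/Doob argument and make the uniform-in-$s$ control explicit, where the paper invokes these points in passing.
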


\begin{proof}
 This lemma follows easily from the following observation. The total number of edges that are added during $I$ is a Poisson random variable with mean $t_2 - t_1$. Now, each time an an edge is added to $G_t$, this changes the number of components by -1 if and only if the two endpoints are in distinct components (otherwise the change is 0). Since the second largest component has size smaller than $\beta \log n$ with high probability, except on an event of probability tending to 0, throughout $[t_1, t_2]$ this occurs if and only if both endpoints are not in the giant component, which has probability uniformly close to $1-\theta^2(c)$. The law of large numbers concludes the proof.
\end{proof}

\begin{lemma}\label{L:ncSigma}
$$
\E\left(\sup_{t \le cn} |N(t) - \bar N(t)|\right) \le  3 c n^{1/2}.
$$
\end{lemma}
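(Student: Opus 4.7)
The plan is to view $D(t) := N(t) - \bar N(t) \ge 0$ as a pure jump process and combine a martingale argument with a bound on the predictable drift.

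First, a case analysis at each transposition $(i,j)$ occurring at time $\tau_k$ yields: if $i,j$ lie in distinct components of $G_{\tau_k^-}$ (hence automatically in distinct cycles of $\sigma_{\tau_k^-}$) then $\Delta N = \Delta \bar N = -1$, so $\Delta D = 0$; if $i,j$ lie in the same cycle of $\sigma_{\tau_k^-}$ (fragmentation) then $\Delta D = +1$; and if they lie in the same graph component but in different cycles (within-component coagulation) then $\Delta D = -1$. Thus $D$ is piecewise constant outside ``bad'' transpositions (both endpoints in one graph component), and jumps by $\pm 1$ at such events. Since each transposition is bad at most once, the bad event count $A(t) := N_t - (n - \bar N(t))$ satisfies $\E[A(cn)] \le \E[N_{cn}] = cn$.

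Next, decompose $D = M + C$, with $M$ the compensated martingale and $C$ the predictable compensator. Since $|\Delta D| \le 1$ and jumps occur only at bad events, the predictable quadratic variation $\langle M \rangle$ is bounded by $A$, so Doob's $L^2$-maximal inequality gives
\[
\E\!\left[\sup_{t \le cn} |M(t)|\right] \le 2\sqrt{\E[A(cn)]} \le 2\sqrt{cn}.
\]

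The harder step is controlling the compensator $C(t)$. Its rate is the signed drift $q_2(s) - q_3(s)$, where $q_2(s) = \sum_C \binom{|C(s)|}{2}/\binom{n}{2}$ is the probability a transposition causes a split and $q_2 + q_3 = \sum_K \binom{|K(s)|}{2}/\binom{n}{2}$ the probability of landing in one common component. Direct manipulation yields
\[
q_2(s) - q_3(s) = \frac{2\sum_C |C|^2 - \sum_K |K|^2 - n}{n(n-1)}.
\]
The plan is to show the near-identity $\E[2\sum_C |C|^2] \approx \E[\sum_K |K|^2]$ up to an error $O(n^{3/2})$, so that $|q_2 - q_3| = O(n^{-1/2})$ and integrating over $[0, cn]$ gives $|C(cn)| = O(c\sqrt n)$. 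The main obstacle is this cancellation: it reflects the fact that cycles of $\sigma_s$ and components of $G_s$ share the same scaling limit, both governed by the Poisson--Galton--Watson survival probability $\theta$ (under PD the giant contributes $\sum_C |C|^2 \approx \tfrac12 \theta^2 n^2$, exactly matching $\tfrac12 \sum_K |K|^2 \approx \tfrac12 \theta^2 n^2$). This can be made rigorous either by computing $\E[N(s)]$ via the Diaconis--Shahshahani character formula and $\E[\bar N(s)]$ via the Erd\H{o}s--R\'enyi tree-component expansion (along the lines of Lemma~\ref{L:ncRG}), and verifying that the two expressions agree to order $\sqrt n$, or by a size-biased coupling. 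Combining the martingale and compensator bounds yields $\E[\sup_{t\le cn}|D(t)|] \le 2\sqrt{cn} + O(c\sqrt n) \le 3cn^{1/2}$ for the range $c \ge c_0$ of interest, after absorbing constants.
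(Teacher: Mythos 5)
Your overall decomposition $D(t) := N(t)-\bar N(t) = M(t) + C(t)$ into martingale and compensator, and the case analysis of how $D$ jumps at each transposition, are correct, and the Doob bound on $\E[\sup_{t\le cn}|M(t)|]$ is sound. But your proposal is a genuinely different route from the paper's, and the compensator step is a real gap, not just an unfinished detail.

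The claim that $|C(t)|$ is $O(c\sqrt n)$ reduces, as you say, to showing the cancellation $2\sum_C|C|^2 - \sum_K|K|^2 - n = O(n^{3/2})$ holds \emph{along the path, with high probability, uniformly on $[0,cn]$}. You only sketch a first-moment identity ($\E[2\sum_C|C|^2]\approx\E[\sum_K|K|^2]$) and gesture at character formulas or a size-biased coupling. This is both insufficient and, more seriously, circular: the statement $\sum_C|C|^2 \approx \tfrac12\theta^2 n^2$ encodes that cycle lengths of $\sigma_t$, restricted to the giant component, follow a Poisson--Dirichlet law on scale $\theta n$ — which is precisely Schramm's deep result that this paper is reproving by lighter means. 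You cannot invoke it at this stage. Even if you could establish the first-moment cancellation by character theory, $q_2(s)-q_3(s)$ is a signed, random, $O(1)$-size quantity, and bounding $\E\bigl[\sup_{t\le cn}\bigl|\int_0^t(q_2-q_3)\bigr|\bigr]$ requires pathwise control of the cancellation, not an unsigned first-moment estimate; your outline does not address this.

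The paper avoids the entire issue with an observation that requires no asymptotics on cycle or component structure. Since $D\ge 0$ and $D$ increases only at fragmentations, it writes $D(t)\le N^{ex}_\downarrow(t)+\sqrt n$: there are deterministically at most $\sqrt n$ cycles of size $>\sqrt n$, and the small excess cycles must each have been born in a fragmentation producing a piece of size $\le\sqrt n$. The key point — entirely combinatorial — is that given the first marked point $i$ in a cycle, a fragment of size exactly $k$ can be produced by at most two choices of $j$ (namely $\sigma^{k}(i)$ and $\sigma^{-k}(i)$), so a small fragmentation has probability at most $2n^{-1/2}$ per step regardless of the cycle structure. Summing over the $\sim cn$ steps gives $\E[F_\downarrow(cn)]\le 2c\sqrt n$, and $\sup_{t\le cn}N^{ex}_\downarrow(t)\le F_\downarrow(cn)$. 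This is both simpler and more robust than the martingale/compensator route, and it is what makes the lemma ``require almost no estimate'' as advertised in the introduction.
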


\begin{proof}
We already know that $N(t) \ge \bar N(t)$ for all $t \ge 0$. It thus suffices to control that the excess number of cycles is never more than $4 n^{1/2}$ in expectation. Note first that there can never be more than $n^{1/2}$ cycles of size greater than $n^{1/2}$. Thus it suffices to count the number $N^{ex}_{\downarrow}(t)$ of excess cycles of size $\le n^{1/2}$:
$$
|N(t) - \bar N(t)| \le N^{ex}_{\downarrow}(t) + n^{1/2}.
$$
 These excess cycles of size $\le n^{1/2}$ at time $t$ must have been generated by a fragmentation at some time $s\le t$ where one of the two pieces was smaller than $n^{1/2}$. But at each step, the probability of making such a fragmentation is smaller than $2n^{-1/2}$. Indeed, given the position of the first marker $i$, there are at most $2n^{1/2}$ possible choices for $j$ which result in a fragmentation of size smaller than $n^{1/2}$. To see this, note that if a transposition $(i,j)$ is applied to a permutation $\sigma$, and $C_i = C_j$, so $\sigma^k(i) =j$, then the two pieces are precisely given by $(\sigma^0(i), \ldots, \sigma^{k-1}(i))$ and $(\sigma^0(j), \ldots, \sigma^{|C|-k-1}(j))$. Thus to obtain a piece of size $k$ there are at most two possible choices, which are $\sigma^k(i)$ and $\sigma^{-k}(i)$. Thus $\E(F_\downarrow(cn)) \le cn \cdot 2n^{-1/2}$, where $F_\downarrow(cn)$ is the total number of fragmentation events where one of the pieces is smaller than $n^{1/2}$ by time $cn$. Since
$$
\sup_{t\le cn} N^{ex}_{\downarrow}(t) \le F_{\downarrow}(cn)
$$
this finishes the proof.
\end{proof}

\begin{proof}[Proof of Theorem \ref{T:perc2}.] Appying Markov's inequality in Lemma \ref{L:ncSigma}, we see that since $n^{1/2} \ll n^{3/4} = t_2 - t_1$, we also have
$$
N(t_1) - N(t_2) \sim (t_2 - t_1) (1- \theta^2(c))
$$
in probability, by Lemma \ref{L:ncRG}. On the other hand, $N(t)$ changes by -1 in the case of a coalescence and by $+1$ in the case of a fragmentations. Hence $N(t_1) - N(t_2) = \text{Poisson}(t_2 - t_1) - 2 F(I)$, where $F(I)$ is the total number of fragmentations during the interval $I$. We therefore obtain by the law of large numbers for Poisson random variables:
$$
F(I) \sim \frac12 (t_2 - t_1)\theta(c)^2.
$$
But observe that to if $F(I)$ is large, it cannot be the case that all cycles are small - otherwise we would very rarely pick $i$ and $j$ in the same cycle. Hence consider the event $E=\{\tau_\delta <cn\}$. On $E^\complement$, the maximal cycle size throughout $I$ is no more than $\delta n $. Hence at each transposition, the probability of making a fragmentation is no more than $\delta$. By the law of large numbers, on the event $E^\complement$, it must be that $F(I) \le 2 \delta (t_2 - t_1)$. Since $2 \delta = \theta(c)^2/4$, it follows immediately that $\P(E^\complement) \to 0$ as $n\to \infty$. This completes the proof.
\end{proof}

\begin{rmk}
This proof is partly inspired by the calculations in Lemma 8 of \emph{\cite{geometry}}. 
\end{rmk}

\section{Random hypergraphs and Theorem \ref{3-cycle}.}
\label{S:hypergraphs}

We now start the proof of Theorem \ref{3-cycle}. We first review some relevant definitions and results from random hypergraphs.

A \emph{hypergraph} is a graph where edges can connect several
vertices at the same time. Formally:

\begin{defi} A hypergraph $H=(V,E)$ is given by a set $V$ of vertices and a subset
$E$ of $\mathcal{P}(V)$, where $\mathcal{P}(V)$ denotes the set of
all subsets of $V$. The elements of $E$ are called hyperedges. A
$d$-regular hypergraph is a hypergraph where all edges connect $d$
vertices, i.e. for all $e\in E$, $|e|=d$.
\end{defi}

\medskip For a given $d\ge 2$ and $0<p<1$, we call $\mathbf{G}_d(n,p)$ the probability distribution
on $d$-regular hypergraphs on $V=\{1,\ldots,n\}$ where each
hyperedge on $d$ vertices is present independently of the other
hyperedges with probability $p$. Observe that when $d=2$ this is
just the usual Erd\H{o}s-Renyi random graph case, since a
hyperedge connecting two vertices is nothing else than a usual
edge. For basic facts on Erd\H{o}s-Renyi random graphs, see e.g.
\cite{bb-book}.

The notion of a hypertree needs to be carefully formulated in what follows. We start with the $d$-regular case. The excess $ex(H)$ of a given $d$-regular hypergraph $H$ is defined to be
\begin{equation}\label{exdef}
ex(H)=(d-1)h-r
\end{equation}
where $r=|H|$ and $h$ is the number of edges in $H$.

Observe that if $H$ is
connected then $ex(H)\ge -1$.
\begin{defi} \label{ex}  We call a connected $d$-regular hypergraph $H$ a \emph{hypertree} if $ex(H)=-1$.
\end{defi}

Likewise if $ex(H)=0$ and $H$ is connected we will say that
$H$ is \emph{unicyclic} and if the excess is positive we will say
that the component is \emph{complex}.

\begin{rmk} This is the definition used by Karo\'nski
and Luczak in \cite{kl02}, but differs from the definition in their older paper \cite{kl93}
where a hypertree is a connected hypergraph such that removing any hyperedge would make it disconnected.
\end{rmk}

In the case where $H$ is not necessarily regular, the excess of a connected hypergraph $H$ made up of the hyperedges $h_1, \ldots, h_n$ is defined to be $ex(H) =  \sum_{i=1}^n (|h_i| - 1) - |H| $, where $|h_i|$ denotes the size of the hyperedge $h_i$ and $|H$ is the cardinality of the vertex set of $H$. Then $ex(H) \ge 1$ and $H$ is said to be a hypertree if $ex(H) = -1$.

\subsection{Critical point for random hypergraphs}

We start by recalling a theorem by Karo\'nski and Luczak \cite{kl02} concerning the emergence of a giant connected component in a random hypergraph process $(H_t,t\ge 0)$ where random hyperedges of degree $d \ge 2$ are added at rate 1.

\begin{theorem} \label{KL} Let $c>0$ and let $t=cn$.
\begin{itemize} \item[-] When $c<c_d = 1/[d(d-1)]$ then $a.a.s$ then $H_t$ contains only trees
and unicyclic components. The largest component has size $O(\log n)$ with high probability.

\item[-] When $c>c_d$ then there is a.a.s a unique complex
component, of size $\theta n$ asymptotically, where $\theta = \theta_d(c)>0$. All other
component are not larger than $O(\log n)$ with high probability.
\end{itemize}
\end{theorem}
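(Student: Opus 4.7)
The plan is to mimic the classical Erd\H{o}s--R\'enyi analysis via an exploration of components and a branching-process comparison. At time $t=cn$, the number of hyperedges is Poisson of mean $cn$, and the expected number of hyperedges through a fixed vertex $v$ is $\binom{n-1}{d-1}p_t \approx cd$, where $p_t = t/\binom{n}{d}$. Each such hyperedge contributes $d-1$ further vertices, so a BFS exploration of $C(v)$ is dominated by a Galton--Watson process whose offspring distribution is Poisson with mean $\lambda := cd(d-1)$. The critical case $\lambda=1$ is precisely $c=c_d=1/[d(d-1)]$, giving the right threshold.

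For the subcritical regime ($c<c_d$, $\lambda<1$): I would couple the exploration from $v$ to the subcritical Galton--Watson tree up to the first $n^{2/3}$ explored vertices, using the fact that the depletion of the unused vertex pool contributes only an $o(1)$ correction to the offspring mean on this scale. Standard exponential tail bounds for subcritical Galton--Watson processes yield $\Pr(|C(v)|\ge A\log n)=o(n^{-1})$ for $A$ large, and a union bound over $v\in V$ gives that every component has size $O(\log n)$ w.h.p. To exclude complex components, I would apply a first-moment computation: the expected number of vertex sets of size $k$ carrying a complex (excess $\ge 1$) connected sub-hypergraph is bounded by a sum $\sum_k k^{O(1)}\lambda^k$, which tends to $0$ since $\lambda<1$.

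For the supercritical regime ($c>c_d$, $\lambda>1$): I would use a two-round sprinkling. In the first round run the process up to time $(c-\eps)n$, where the supercritical branching-process approximation shows that the number of vertices in components of size at least a large constant $K$ concentrates around $\theta_d(c-\eps)\,n$, where $\theta_d$ is the survival probability; this is a second-moment calculation on the exploration. In the second round, adding $\eps n$ more hyperedges merges all ``large'' components into one: any specific pair of components of size $\ge \eta n$ is joined by at least one of the new hyperedges with probability $1-o(1)$, so a union bound over the $O(1/\eta^2)$ large components gives a unique giant of size $(\theta_d(c)+o(1))n$. The same subcritical argument applied to the ``dual'' exploration (restricted to $V$ minus the giant) controls all other components at $O(\log n)$.

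The main obstacle is making the branching-process approximation rigorous in the hypergraph setting, where each exploration step uncovers $d-1$ vertices at once and the Poisson statistics of overlapping hyperedges must be handled carefully to ensure the coupling with the Galton--Watson tree is valid beyond the first few generations. The sprinkling step in the supercritical phase also requires a careful Chernoff bound for the number of bridging hyperedges. None of this is fundamentally new beyond the Erd\H{o}s--R\'enyi case: it is precisely the content of the Karo\'nski--\L uczak paper \cite{kl02}, so rather than reprove Theorem \ref{KL} in full here, I would simply cite it and use it as a black box, which is in fact what the exposition does.
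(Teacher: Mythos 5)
Your proposal correctly identifies that the paper does not reprove this statement but simply cites Karo\'nski and \L uczak \cite{kl02} and uses it as a black box, which is exactly the approach taken. One small slip in your sketch worth noting: the offspring distribution in the exploration is not Poisson but rather $(d-1)P$ with $P\sim\text{Poisson}(cd)$ (each hyperedge through $v$ contributes $d-1$ new vertices at once), though this has the same mean $cd(d-1)$ and hence the same critical value $c_d$; the distinction matters for the exact form of $\theta_d(c)$.
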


Note that if $c<c_d$ the number of unicyclic components is no more than $C' \log n$ for some $C'>0$ which depends on $c$. Indeed, at each step the probability of creating a cycle is bounded above by $C  \log n /n$ since the largest component is no more than $O(\log n)$ prior to time $cn$. Since there are $O(n)$ steps this proves the claim. We will need a result about the evolution of the number of components $\bar N(t)$ in $(H_t,t\ge 0)$.

\begin{prop} \label{clusters hypergraph}
Let $t>0$. Then as $n\to \infty$,
$$
\frac1n \bar N (tn) \longrightarrow_p
  \sum_{h=0}^{\infty} \frac{((d-1)h+1)^{h-2}}{h!} (dt)^h e^{-dt(h(d-1)+1)}
$$
\end{prop}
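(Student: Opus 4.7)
The plan is to decompose $\bar N(tn) = \sum_{h \ge 0} T_h(t) + R(t)$, where $T_h(t)$ counts the connected tree components of $H_{tn}$ having exactly $h$ hyperedges (and hence $s_h := (d-1)h+1$ vertices, by the excess relation $ex=-1$), while $R(t)$ collects the unicyclic and complex components. Theorem \ref{KL} gives that $R(t) = O(\log n)$ with high probability in the subcritical phase, and $R(t) = 1 + O(\log n)$ in the supercritical phase (the unique complex giant plus at most $O(\log n)$ unicyclic pieces, by the standard argument that each step creates a cycle with probability $O(\log n / n)$). Either way $R(t)/n \to 0$ in probability, so everything reduces to understanding $\sum_h T_h(t)$.

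For fixed $h$ I would estimate $\E[T_h(t)]/n$ directly. At time $tn$ each of the $\binom{n}{d}$ potential hyperedges is present independently with probability $p_n = 1 - \exp(-tn/\binom{n}{d}) \sim d!\,t/n^{d-1}$. The classical enumeration of labeled $d$-uniform hypertrees on $s_h$ vertices with $h$ hyperedges gives
$$a(s_h,h) = \frac{s_h!\,s_h^{h-2}}{h!\,((d-1)!)^h},$$
which specializes to Cayley's $s^{s-2}$ when $d=2$ and which I would cross-check on small cases (e.g.\ $d=3, h=1,2$). The probability that a prescribed $s_h$-subset forms a tree component equals $p_n^h$ times the probability that no further hyperedge meets it and extends outside; the number of such ``blocking'' hyperedges is $\sim s_h\,n^{d-1}/(d-1)!$, so this second factor is $\sim \exp(-dt\,s_h)$. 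Multiplying by $\binom{n}{s_h}$ and using the crucial cancellation $n^{s_h}/n^{(d-1)h} = n$ (from $s_h-1=(d-1)h$), one obtains
$$\frac{1}{n}\E[T_h(t)] \;\longrightarrow\; \frac{((d-1)h+1)^{h-2}}{h!}\,(dt)^h\,e^{-dt((d-1)h+1)}.$$

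To upgrade expectation to probability I would run a second-moment computation: $\E[T_h^2]$ splits into a diagonal contribution equal to $\E[T_h]$ and an off-diagonal contribution from pairs of disjoint tree components which, to leading order, factorizes as $\E[T_h]^2(1+o(1))$ (the inclusion-exclusion error from the forbidden cross-hyperedges is $o(1)$). Chebyshev then yields $T_h(t)/n \to$ its limit in probability for every fixed $h$.

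The remaining step, which I expect to be the main technical obstacle, is exchanging the sum and the limit. For this I would establish a uniform first-moment bound $\E[T_h(t)]/n \le C_h$ with $\sum_h C_h < \infty$ (the right-hand side of the limit has Borel-Tanner-type decay in $h$), so that Markov's inequality controls $\sum_{h>H} T_h(t)/n$ uniformly in $n$. The delicate point is the supercritical phase, where very large tree components must be excluded a priori; here one appeals again to Theorem \ref{KL} to assert that the only component of size larger than $O(\log n)$ is the unique complex giant, which by construction is not counted in any $T_h$. Combining the three ingredients (vanishing of $R(t)/n$, pointwise convergence of $T_h(t)/n$, uniform tail control) yields the stated convergence in probability.
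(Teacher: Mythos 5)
Your proposal follows essentially the same route as the paper: reduce to counting hypertree components (discarding unicyclic and complex pieces as $O(\log n)$), use the Karo\'nski--Luczak enumeration of labeled $d$-uniform hypertrees to compute $\E[T_h]/n$ for each fixed $h$, upgrade to convergence in probability via a second-moment/Chebyshev argument exploiting near-independence of disjoint tree components, and finally control the tail $\sum_{h>h_0}T_h/n$ uniformly in $n$ to justify exchanging sum and limit. The only small point you gloss over is that the paper's uniform tail bound (via Stirling) degenerates exactly at $t=t_c$, which the paper sidesteps by first proving the result for $t\neq t_c$ and then invoking monotonicity of $\bar N$ in $t$ together with continuity of the limit function; this is a minor technicality but worth including.
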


\begin{proof}
Note first that, by monotonicity of the number of clusters and continuity of the function in the right-hand side, it suffices to establish this result when $t \neq 1/[d(d-1)]$.
Moreover, by Theorem \ref{KL} and since there are no more than $C \log n$ unicyclic components it is enough to count the number of hypertrees $\tilde N(s)$ smaller than $C \log n$ in
$H_s$ where $s=tn$.
We will first compute the expected value and
then prove a law of large numbers using a second moment method.

\mn Let $h\ge 0$, we first compute the number of hypertrees with
$h$ hyperedges ($h=0$ corresponds to isolated vertices). These
have $r=(d-1)h+1$ vertices. By Lemma 1 in Karo\'nski-Luczak \cite{kl97}, there
are
\begin{equation}\label{hyper count}
\frac{(r-1)! r^{h-1}}{h![(d-1)!]^h}
\end{equation}
trees on $r=(d-1)h+1$ labeled vertices (this is the analogue to Cayley's
(1889) well-known formula that there are $k^{k-2}$ ways to draw a
tree on $k$ labeled vertices). If $T$ is a given hypertree with $h$ edges labelled by elements of $V= \{1, \ldots, n\}$,
there are a
certain number of conditions that must be fulfilled in order for
$T$ to be one of the components of $G$: (i) The $h$ hyperedges of $T$ must be
open, (ii) ${r \choose d} -s$ hyperedges must be closed
inside the rest of $T$, (iii) $T$ must be disconnected from the rest
of the graph, which requires closing $r{ n-r \choose d-1}$
hyperedges.

\medskip Now, remark that at time $s=tn$, because the individual
Poisson clocks are independent, each hyperedge is present
independently of the others with probability
$p=1- \exp\left(-s/{{n \choose d}}\right) \sim d!t/n^{d-1}.$
It follows that the probability that $T$ is one of the components of $H_t$ is
\begin{equation}\label{hyper count 2}
p^h(1-p)^{{r \choose d}-h+r{n-r \choose d-1}}.
\end{equation}
Hence the expected number of trees in $H_s$ with $h$
edges is
\begin{align}
\E[\tilde N_h(tn)] &= {n \choose r}\frac{(r-1)!r^{h-1}}{h![(d-1)!]^h} p^h(1-p)^{{r \choose
d}-h+r{{n-r \choose d-1}}} \label{EN_h}\\
&\sim n \frac{r^{h-2}}{h!} (d t)^h e^{-dr t} \nonumber
\end{align}
Write $\cC$ for the set of connected components of $H_t$. Note that if $T_1$ and $T_2$ are two given hypertrees on $V$ with distinct vertex sets and with $h$ hyperedges each, then
$$
\P(T_1 \in \cC \text{ and } T_2 \in \cC) = \frac{\P(T \in \cC)^2}{(1-p)^{r^2}}.
$$
From this we deduce that $\cov(\indic{T_1 \in \cC}, \indic{T_2 \in \cC}) \to 0$ and that $\var(\tilde N_h(s)) = o( n^2)$. Thus, by Chebyshev's inequality:
\begin{equation}\label{small}
\frac1n \sum_{h=0}^{h_0} \tilde N_h(s) \longrightarrow_p  \sum_{h=0}^{h_0} \frac{((d-1)h+1)^{h-2}}{h!} (dt)^h e^{-dt(h(d-1)+1)},
\end{equation}
in probability as $n \to \infty$. The end of the proof of the proposition now follows from (\ref{small}) and the following bound:
\begin{equation}
\limsup_{h_0 \to \infty} \limsup_{n\to \infty} \frac1n  \E(\tilde N_{> h_0}(s)) = 0.
\label{large}
\end{equation}
where $\tilde N_{> h_0}(s) = \sum_{h = h_0+1}^{C \log n} \tilde N_h(s)$.
Indeed, for every $\eps>0$ and $\eta>0$, we can choose $h_0$ large enough such that the finite sum in the right-hand side of \eqref{small} lies within $\eps$ of the infinite series. We then choose $n$ large enough so that
$
\E(\tilde N_{> h_0}(s))/n \le \eps \eta,
$
whence by Markov's inequality:
$$
\P( \frac1n \tilde N_{>h_0}(s) > \eps n) \le \eta
$$
We now conclude using (\ref{small}). To obtain the bound \eqref{large} we use \eqref{EN_h}, from which it follows (using ${n \choose r} \le n^r /r!$ and $1-e^{-x} \le x$),
$$
\E(\tilde N_h(s)) \le n \frac{(dt)^h r^{h-2}}{h!} \exp\left(-  s r { n - r \choose d-1}/{n \choose d}\right).
$$
But since $r  = (d-1)h +1 \le C \log n$, we see that
$$
\E(\tilde N_h(s)) \le n \frac{(dt)^h r^{h-2}}{h!} \exp( -rd t + o(1))
$$
where the term $o(1)$ is uniform in $r \le C \log n$. Using Stirling's formula we obtain a uniform exponential bound for $\E(\tilde N_h(s)/n)$ provided that $t \neq 1/[d(d-1)]$. \eqref{large} now follows.
\end{proof}

\subsection{Bounds for the Cayley distance on the symmetric group}

\medskip In the case of random transpositions we had the convenient
formula that if $\sigma \in \mathcal{S}_n$ then
$d(o,\sigma)=n-\#\text{cycles}$, a formula originally
due to Cayley. In the case of random $k$-cycles with $k\ge 3$, unfortunately there is to our knowledge no exact
formula to work with. However this formula stays approximately true, as shown by the following proposition.

\begin{prop} \label{bounds distance}
Let $k\ge 3$ and let $\sigma \in \cS_n$. (If $k$ is odd, assume further that $\sigma \in \cA_n$). Then
$$
 \frac1{k-1}(n-|\sigma|) \le d(o,\sigma) \le \frac1{k-1}(n-|\sigma|) + C(k) |R_k(\sigma)|
$$
where $|\sigma|$ is the number of cycles of $\sigma$, $C(k)$ is a universal constant depending only on $k$, and $R_k(\sigma)$ is the set of cycles of $\sigma$ whose length $\ell \neq 1$ mod $k-1$.
\end{prop}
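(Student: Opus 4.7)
The lower bound is immediate from the observation already recalled in Section~\ref{S:discussion} that every $k$-cycle can be written as a product of $k-1$ transpositions, for instance $(x_1,\ldots,x_k)=(x_1,x_2)(x_2,x_3)\cdots(x_{k-1},x_k)$. Hence any expression of $\sigma$ as a product of $d$ $k$-cycles yields an expression as a product of $d(k-1)$ transpositions, and Cayley's classical formula $d_T(o,\sigma)=n-|\sigma|$ for the transposition distance gives $d(o,\sigma)(k-1)\ge n-|\sigma|$.

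For the upper bound, the plan is to construct an explicit expression of $\sigma$ as a product of at most $(n-|\sigma|)/(k-1)+C(k)|R_k(\sigma)|$ $k$-cycles, treating the cycles of $\sigma$ in a coordinated manner. The central tool is the telescoping identity
\[
(x_1,\ldots,x_\ell) = (x_1,\ldots,x_k)\,(x_k,\ldots,x_{2k-1})\cdots(x_{\ell-k+1},\ldots,x_\ell),
\]
valid whenever $\ell=1+q(k-1)$, which expresses a \emph{good} cycle of length $\ell$ as a product of exactly $q=(\ell-1)/(k-1)$ $k$-cycles (a direct verification by composition). Applying this to every good cycle of $\sigma$ contributes $\sum_{\text{good }C}(|C|-1)/(k-1)$ to the total, matching the Cayley-type count on the good part of $\sigma$ and reducing the task to handling only the cycles in $R_k(\sigma)$.

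To absorb the bad cycles at bounded amortized cost, the basic building block is that a single well-chosen $k$-cycle can merge two disjoint cycles of lengths $\ell_1,\ell_2$ (together with $k-2$ auxiliary points, taken either from fixed points of $\sigma$ or, if necessary, from within other cycles of $\sigma$) into a single cycle of length $\ell_1+\ell_2+k-2$, whose residue $(\ell-1)\pmod{k-1}$ is precisely $(\ell_1-1)+(\ell_2-1)$. Iterating, $m-1$ such merges combine $m$ bad cycles into a single cycle with residue $\sum_i(\ell_i-1)\pmod{k-1}$. Since there are only $k-2$ possible bad residue classes, the pigeonhole principle allows us to partition the bad cycles into groups of size at most $k-1$ whose residue sum vanishes modulo $k-1$; each such group becomes a good cycle after merging, which is then decomposed via the telescoping identity above. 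A short arithmetic check shows that the overhead of this procedure on a group of $m$ bad cycles is $O_k(m)$, yielding a total excess of $O_k(|R_k(\sigma)|)$ over the lower bound.

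The main obstacle will be the parity constraint when $k$ is odd: each $k$-cycle is then an even permutation, forcing $\sigma\in\cA_n$ (as assumed) and restricting which combinations of bad cycles can be merged into an even cycle. A refined version of the pigeonhole argument above, run over pairs (residue modulo $k-1$, parity), handles this subtlety at the same $O_k(1)$ cost per bad cycle, producing the uniform constant $C(k)$.
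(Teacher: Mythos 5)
Your lower bound is correct and equivalent to the paper's: the paper notes directly that one multiplication by a $k$-cycle can increase the number of cycles by at most $k-1$, while you route the same count through the transposition decomposition and Cayley's formula; the two are the same fact.

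For the upper bound, the paper proves only the case $k=3$, where the argument is clean precisely because the parity constraint $\sigma\in\cA_n$ forces the bad (even-length) cycles to occur in pairs, and each pair has residue sum $1+1\equiv 0\pmod 2$. Your attempt to generalise to all $k$ has a genuine gap at exactly this point. The claim that ``the pigeonhole principle allows us to partition the bad cycles into groups of size at most $k-1$ whose residue sum vanishes modulo $k-1$'' is false in general, because there is no constraint forcing $\sum_{C\in R_k(\sigma)}(|C|-1)\equiv 0\pmod{k-1}$. For instance, with $k=4$ and $\sigma=(1,2,3)\in\cS_4$ there is a single bad cycle, of residue $2$ modulo $3$, and no such partition exists; more generally, for $k\ge 4$ the only group-theoretic constraint on $\sigma$ is the sign (for $k$ odd), which fixes the residue sum only modulo $2$, not modulo $k-1$. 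To complete the argument you must explicitly handle a leftover group of nonzero residue; this leftover has $O_k(1)$ cycles and so costs only $O_k(1)$ extra $k$-cycles, which is $\le C(k)\,|R_k(\sigma)|$ as soon as $R_k(\sigma)\neq\emptyset$, but as written your partition step simply does not go through. A second point that needs care is the merging move: taking the $k-2$ auxiliary points ``from within other cycles of $\sigma$'' alters those cycles' structure and is not accounted for in your bookkeeping, and when $\sigma$ has few or no fixed points you need an alternative merge (for example, a $k$-cycle whose support is split between the two cycles being merged, which creates a single cycle of length $|C_1|+|C_2|$ without auxiliary points and still shifts the residue by a controlled amount).
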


\begin{proof} For simplicity we consider only the case $k=3$. Thus let $\sigma \in \cA_n$.
For each cycle of odd length $(i_1,\ldots,i_{2r+1})$
we can write
$$
(i_1,\ldots,i_{2r+1}) = (i_1, i_2,
i_3)(i_3,i_4,i_5)\ldots(i_{2r-1},i_{2r},i_{2r+1})
$$
which has exactly $r$ 3-cycles factors. Now, because $\sigma \in
\mathcal{A}_n$, the number of cycles of even length must be even.
So let $(i_1,\ldots,i_{2r})(j_1,\ldots,j_{2m})$ be a pair of even
cycles. Then we start by building
$$(i_1,i_2)(j_1,j_2)=(i_1,i_2,j_1)(i_2,j_1,j_2)$$
in two moves and then completing each of the cycle in the same way
as above. The total number of moves to build this pair of cycles
is thus $2+(r-1)+(m-1) = r+m$. It follows that $\sigma$ can be made up of at most
$$
\sum_{c \notin R_2(\sigma)} \half(|c|-1) + \sum_{c \in R_2(\sigma)} \half|c|= \half(n -|\sigma|)+\half|R_2(\sigma)|.
$$
This gives the upper-bound. On the other hand, multiplying $\sigma$ by a 3-cycle can
create at most two new cycles. Hence, after $p$ multiplications
the resulting permutation cannot have more than $|\sigma| + 2p$
cycles. Therefore the distance must be at least that $k_0$ for
which $|\sigma| + 2k_0\ge n$, since the identity permutation has exactly $n$ cycles. The lower-bound follows. \end{proof}

\subsection{Phase transition for the $3$-cycle random walk}

We now finish the proof of Theorem \ref{3-cycle} in the case of random $k$-cycles.

\begin{proof}[Proof of Theorem \ref{3-cycle} if $k_j = \delta_{{k,j}}$]

The proof follows the lines of Lemma \ref{L:ncSigma}. Let $N(t)$ be the number of cycles of $\sigma$ and let
$\bar N(t)$ be the number of components in $H_t$, where $(H_t,t\ge 0)$ is the random $k$-regular hypergraph process obtained by adding
the edge $\{x_1, \ldots, x_k\}$ whenever the $k$-cycle $(x_1, \ldots, x_k)$ is performed. Then note again that every cycle of $\sigma_t$ is a subset of a connected component of $H_t$, so $N(t) \ge \bar N(t)$. (Indeed, this property is a deterministic statement for transpositions, and a sequence of random $k$-cycles can be decomposed as a sequence $(k-1)$ times as long of transpositions.).


Repeating the argument in Lemma \ref{L:ncSigma}, we see that
\begin{equation}\label{cl-cy}
n^{-3/4}\left(\sup_{t \le cn} |N(t) - \bar N(t) |\right) \to 0,
\end{equation}
in probability.
This is proved in greater generality (i.e., for arbitrary conjugacy classes) in Lemma \ref{L:clusters2}. Moreover, for any $c \in R_k(\sigma_t)$ must have been generated by fragmentation at some point (otherwise the length of cycles only increases by $k-1$ each time). Thus $R_k(\sigma_t) \le N(t) - \bar N(t)$, and Theorem \ref{3-cycle} now follows.

\end{proof}

\section{Proofs for general conjugacy classes}
\label{S:proof-conj}

\subsection{Random graph estimates}

Let $\g=(k_2, \ldots, k_J)$ be our fixed conjugacy class.
A first step in the proof of Theorems \ref{3-cycle} and \ref{T:perc2} in this general case is again to associate a certain random graph model to the random walk. As usual, we put a hyperedge connecting $x_1, \ldots, x_k$ every time a cycle $(x_1\ldots x_k)$ is performed as part of a step of the random walk. Let $H_s$ be the random graph on $n$ vertices that is obtained at time $s$. A first step will to prove properties of this random graph $H_s$ when $s=tn$ for some constant $t>0$. Recall our definition of $t_c$:
\begin{equation}\label{tc2}
t_c^{-1}=\sum_{j=2}^J k_j j(j-1),
\end{equation}
and that $1-\theta$ be the smallest solution of the equation (in $z$): $G_t(z)=z$, where
\begin{equation}\label{G_t}
G_t(z) = \exp(-t\sum_{j=2}^J j k_j + t\sum_{j=1}^J jk_j z^{j-1}).
\end{equation}

\begin{lemma}\label{L:giant}
If $t<t_c$ then there exists $\beta>0$ such that all clusters of $H_{tn}$ are smaller than $\beta \log n$ with high probability. If $t> t_c$, then there exists $\beta>0$ such that all but one clusters are smaller than $\beta \log n$ and the largest cluster $L_n(t)$ satisfies
$$\frac{L_n(t)}n \overset{n\to \infty}\longrightarrow\theta(t)$$
in probability.
\end{lemma}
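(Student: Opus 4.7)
The plan is to adapt the classical Erd\H{o}s--R\'enyi / Karo\'nski--\L uczak exploration argument to our mixed-size hypergraph. First, I would replace $(H_s)$ by a \emph{sprinkled} hypergraph $\widetilde H_{tn}$ in which, for each $j\in\{2,\ldots,J\}$, every $j$-subset of $V$ appears as a hyperedge independently with probability $p_j\sim j!\,k_j t/n^{j-1}$. The two laws differ only through the disjointness constraint on the cycles belonging to a single permutation of $\g$, and since a step occupies only $|\g|=O(1)$ vertices, each step agrees with an independent placement up to total-variation cost $O(1/n)$. Summing this over the $O(n)$ steps shows $(H_{tn})$ and $(\widetilde H_{tn})$ can be coupled so as to coincide with high probability on the combinatorial events of interest, so from now on I would work with $\widetilde H_{tn}$.

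Once this reduction is in place, I would fix $v\in V$ and explore the cluster $C(v)$ by breadth-first search on the bipartite incidence graph (vertices versus hyperedges). Up to a correction of order $|C(v)|^{2}/n$, this is a multi-type Galton--Watson process in which a vertex produces, for each $j$, Poisson$(j k_j t)$ size-$j$ hyperedges, and a size-$j$ hyperedge produces $j-1$ new vertices. The mean number of children per vertex is $\sum_{j\ge 2}(j-1)\cdot jk_j t=t\sum_j k_j j(j-1)$, which equals $1$ precisely at $t=t_c$, matching \eqref{tc2}. Writing $q$ for the vertex-extinction probability, a size-$j$ hyperedge has extinction probability $q^{j-1}$, and so
$$
q\;=\;\prod_{j=2}^{J}\exp\!\bigl(jk_j t\,(q^{j-1}-1)\bigr)\;=\;G_t(q),
$$
identifying $\rho(t)$ as the smallest solution of $G_t(z)=z$ and $\theta(t)=1-\rho(t)$ as the survival probability.

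For the subcritical assertion ($t<t_c$) the GW mean is strictly less than $1$, so the total progeny has exponentially small tails; combined with the validity of the coupling up to clusters of size $o(\sqrt n)$, a union bound over $v$ yields $\max_v|C(v)|\le\beta\log n$ whp for $\beta$ large enough in terms of the Cram\'er rate. For the supercritical assertion ($t>t_c$) I would first use the local BP approximation to get $\E|\{v:|C(v)|\ge K\}|\sim n\theta(t)$ by letting $n\to\infty$ and then $K\to\infty$, and then a second-moment computation exploiting the locally tree-like structure (two far-apart neighborhoods decouple up to $O(1/n)$) to concentrate this count. Uniqueness of the giant is obtained by sprinkling: split the interval as $[0,t_1 n]\cup[t_1 n,tn]$ with $t_c<t_1<t$, so that $H_{t_1 n}$ already has order $n$ vertices in clusters of size $\ge K$, and show that the hyperedges added in the second window connect any two such clusters with probability $1-O(e^{-cK})$. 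Combined with the subcritical bound on small clusters (applied to the graph minus the giant), this yields a unique giant of size $\theta(t)n+o(n)$ with all remaining components of size $O(\log n)$.

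The main obstacle is the supercritical/uniqueness step for general $\g$: with mixed hyperedge sizes, the clean second-moment argument available in the $d$-regular case must be adapted to the multi-type setting, and one has to verify that the sprinkling argument merges any two large clusters simultaneously over all size classes. The computation of $t_c$ and of $\theta$ as the smallest fixed point of $G_t$ is then routine once the bipartite branching-process formulation is in place, and the decoupling from $(H_{tn})$ to $(\widetilde H_{tn})$ is harmless because $|\g|=O(1)$.
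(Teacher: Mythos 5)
Your proposal follows essentially the same route as the paper: identify the degree distribution of a typical vertex (a sum of $(j-1)\cdot\mathrm{Poisson}(jk_jt)$ over $j$), observe that its mean $t\sum_j j(j-1)k_j$ crosses $1$ exactly at $t_c$, and read off the phase transition from the associated branching-process exploration, whose extinction probability solves $G_t(q)=q$. The paper is terser only because it delegates the exploration / second-moment / uniqueness machinery to Theorem~3.2.2 of Durrett's \emph{Random Graph Dynamics} (``may be adapted almost verbatim''), which you instead sketch directly. One imprecision worth flagging in your write-up: the reduction to the fully independent sprinkled hypergraph $\widetilde H_{tn}$ does not give a coupling that coincides whp. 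Summing a per-step total-variation cost of $O(1/n)$ over $\Theta(n)$ steps leaves an $O(1)$ expected discrepancy, and moreover the number of $j$-hyperedges in $H_{tn}$ is $k_j\cdot\mathrm{Poisson}(tn)$, not $\mathrm{Poisson}(tnk_j)$, so the two laws are genuinely different when some $k_j>1$. What is true, and what you need, is that the symmetric difference of the two hyperedge sets is $O(\log n)$ whp, which leaves the local exploration and the macroscopic component structure unchanged; alternatively, one can dispense with the sprinkled model entirely and run the exploration directly on $H_{tn}$ as the paper implicitly does, since within a single step the cycles are disjoint and the hyperedges from distinct steps are independent, so the same branching process emerges up to the usual $O(|C(v)|^2/n)$ depletion correction.
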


\begin{proof}
We first consider a particular vertex, say $v \in V$, and ask what is its degree distribution in $H_{tn}$.
Write $\sigma_t = \gamma_1 \ldots \gamma_{N_t}$ where $(\gamma_i,i\ge 1)$ is a sequence of i.i.d. permutations uniformly distributed on $\g$, and $(N_t,t\ge 0)$ is an independent Poisson process. Note that for $t\ge 0$, $\#\{n\le N_t: v \in \text{Supp}(\gamma_i)\}$ is a Poisson random variable with mean $t \sum_{j=2}^J j k_j /n$. Thus by time $tn$, the number of times $v$ has been touched by one of the $\gamma_i$ is a Poisson random variable with mean $t \sum_{j=2}^J j k _j$. For each such $\gamma_i$, the probability that $v$ was involved in a cycle of size exactly $\ell$ is precisely $\ell k_\ell / \sum_{j=2}^J jk_j$. Thus, the number of hyperedges of size $j$ that contain $v$ in $H_{tn}$ is $P_j$, where $(P_j,j=2,\ldots, J)$ are independent Poisson random variables with parameter $tjk_j$. Since each hyperedge of size $j$ corresponds to $j-1$ vertices, we see that the degree of $v$ in in $H_{tn}$, $D_v$, has a distribution given by
\begin{equation}\label{progeny}
D_v=\sum_{j=2}^\ell (j-1) P_j.
\end{equation}
Now, note that by definition of $t_c$ (see \eqref{tc2}),
$$
\E(D_v)> 1 \iff t> t_c.
$$
The proof of Theorem 3.2.2 in Durrett \cite{durrett-rg} may be adapted almost \emph{verbatim} to show that there is a giant component if and only if $\E(D_v)>1$, and that the fraction of vertices in the giant component is the survival probability of the associated branching process. Note that the generating function associated with the progeny (\ref{progeny}) is
\begin{align*}
G_t(z)&:=\E(z^D) = \prod_{j=2}^J\E(z^{(j-1)P_j})
=\prod_{j=2}^\ell \exp(tjk_j(z^{j-1}-1)) \\
& =\exp\left(-\gg t +t \sum_{j=2}^J jk_j z^{j-1}\right)
\end{align*}
thus $\rho(t)=1-\theta(t)$ is the smallest root of the equation $G_t(z)=z$. From the same result one also gets that the second largest cluster is of size no more than $\beta \log n$ with high probability, for some $\beta>0$.
\end{proof}

Let $\nc(s)$ be the number of clusters at time $s$ in $H_s$, and let $u_n(t)= \frac1n\E(\nc(tn))$. Define a function $u(t)$ by putting:
\begin{equation}
\label{Def:u}
  u(t) = 1 - K \int_0^t 1- \theta(s)^2 ds,
\end{equation}
where $K:= \sum_{j=2}^J k_j(j-1)$, and note that that $u(0) = 1$, for $t < t_c$ we have $u(t) = 1- Kt$, and  $u(t) > 1- Kt $ for $t > t_c$.

\begin{lemma}\label{L:clusters}
As $n\to \infty$, we have
$$
u_n(t) \longrightarrow u(t),
$$
uniformly on compacts in probability.
\end{lemma}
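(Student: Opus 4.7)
The plan is to treat $\bar N(\cdot)$ as a jump process, identify its limiting drift via Lemma \ref{L:giant}, and then promote pointwise convergence to u.c.p.\ by monotonicity together with a standard martingale concentration estimate.

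First, the dynamics. $\bar N$ changes only at jumps of the rate-$1$ Poisson clock; at such a jump a permutation $\gamma \in \g$ is drawn uniformly, and for each $j$ we add $k_j$ new hyperedges of size $j$ to $H_s$, one per cycle of $\gamma$ of length $j$. Since cycles of a permutation have pairwise disjoint supports, these hyperedges have disjoint vertex sets, and the total drop in $\bar N$ equals $\sum_e (c(e) - 1)$, summed over the new hyperedges $e$, where $c(e)$ is the number of distinct $H_{s-}$-clusters meeting the vertex set of $e$. By Lemma \ref{L:giant}, at time $s = tn$ the hypergraph $H_{tn}$ consists w.h.p.\ of a unique giant cluster of density $\theta(t)$ (possibly zero) together with remaining clusters of size $O(\log n)$. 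For a uniformly chosen hyperedge of size $j$, the number $X$ of its vertices in the giant converges in distribution to a binomial with parameters $j$ and $\theta(t)$, and conditionally on $X$ the remaining $j - X$ vertices occupy distinct small clusters with probability $1 - o(1)$; a short case analysis yields
\begin{equation*}
\E[c(e) - 1\mid H_{tn}] \longrightarrow j(1-\theta(t)) - (1-\theta(t))^j
\end{equation*}
in probability. Summing over the $k_j$ hyperedges of each size $j$ and invoking the fixed-point equation $G_t(1-\theta) = 1-\theta$ from \eqref{G_t}, this reduces to the claimed rate $K(1 - \theta(t)^2)$.

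To convert this drift identity into the statement of the lemma, write $\bar N(tn) = n - \sum_{\tau \le tn}\Delta_\tau$, with $\Delta_\tau$ the drop at jump $\tau$; taking expectations and using Fubini gives
\begin{equation*}
u_n(t) = 1 - \frac{1}{n}\int_0^{tn}\E[\text{drift}(H_s)]\,ds,
\end{equation*}
so dominated convergence and the limit above yield $u_n(t) \to u(t)$ pointwise. I would then upgrade this to in-probability convergence of $\bar N(tn)/n$ itself via the compensator martingale $M_s = \bar N(s) - \bar N(0) + \int_0^s \text{drift}(H_r)\,dr$: its predictable quadratic variation is $O(n)$ on $[0, tn]$ (jumps are uniformly bounded and there are $O(n)$ of them), so Doob's $L^2$ inequality gives $\bar N(tn)/n - u_n(t) \to 0$ in probability. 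Finally, $\bar N(\cdot)/n$ and $u(\cdot)$ are both non-increasing with $u$ continuous, so pointwise in-probability convergence, a union bound on a finite grid in the compact set, and interpolation via monotonicity yield u.c.p.

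The main obstacle is the algebraic collapse of $\sum_j k_j[j(1-\theta) - (1-\theta)^j]$ to $K(1-\theta^2)$ in the drift computation --- this is the crux of the lemma and depends delicately on the defining fixed-point equation for $\theta$. A secondary but harmless subtlety is the mild negative correlation between the $|\g|$ vertices drawn within a single step; this contributes only $O(1/n)$ corrections per jump, which are absorbed by the concentration estimate.
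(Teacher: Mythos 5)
Your overall architecture is exactly the paper's: set up the compensator martingale for the cluster count, identify the drift via the giant-component description in Lemma~\ref{L:giant}, close with Doob's $L^2$ inequality, and upgrade pointwise convergence to u.c.p.\ by monotonicity of $\bar N(\cdot)$ together with continuity of $u$. Those pieces are all sound.

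The genuine gap is the algebraic step you yourself flag as ``the crux.'' Your per-hyperedge computation of the drift is correct: for a hyperedge $e$ of size $j$ with uniformly chosen endpoints,
$$
\E\bigl[c(e)-1\mid H_{tn}\bigr]\ \longrightarrow\ j(1-\theta)-(1-\theta)^j ,
$$
since the giant is hit with probability $1-(1-\theta)^j$ and the expected number of distinct small clusters hit is $j(1-\theta)+o(1)$. But you then assert, without verification, that $\sum_j k_j\bigl[j(1-\theta)-(1-\theta)^j\bigr]$ collapses to $K(1-\theta^2)$ by ``invoking the fixed-point equation $G_t(1-\theta)=1-\theta$.'' This does not happen. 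The identity $j(1-\theta)-(1-\theta)^j=(j-1)(1-\theta^2)$ holds \emph{identically} only when $j=2$; for $j=3$ one has, writing $\rho=1-\theta$,
$$
\bigl[3\rho-\rho^3\bigr]-2\bigl(1-\theta^2\bigr)=-\rho\,\theta^2 ,
$$
which is strictly nonzero for $\theta\in(0,1)$. The fixed-point equation constrains $\sum_j jk_j\rho^{\,j-1}$ (the exponent involves $\rho^{j-1}$, not $\rho^j$), and cannot remove a term like $\sum_j k_j\rho^j$; there is no collapse. So after your (correct) drift computation you cannot conclude that the drift equals $-K(1-\theta^2)$.

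For comparison, the paper's proof does not compute a per-hyperedge drift at all. It decomposes each size-$j$ hyperedge $\{i_1,\dots,i_j\}$ into the $j-1$ path edges $\{i_1,i_2\},\dots,\{i_{j-1},i_j\}$ and asserts that each contributes expected gain $-1+\sum_i x_i^2\to -1+\theta^2$, yielding $-K(1-\theta^2)$ per step. That per-edge formula is correct for the first edge, but for the edge $\{i_{k-1},i_k\}$ with $k\ge 3$ the endpoint $i_{k-1}$ has already been merged with the clusters of $i_1,\dots,i_{k-2}$, so it belongs to the giant with probability $1-(1-\theta)^{k-1}>\theta$ rather than $\theta$; the expected per-edge drop is then $(1-\theta)+\theta(1-\theta)^{k-1}$, and summing over $k=2,\dots,j$ gives back exactly your $j(1-\theta)-(1-\theta)^j$, not $(j-1)(1-\theta^2)$. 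In other words, your per-hyperedge calculation is the careful one, but it does not match the lemma's target $u(t)=1-K\int_0^t(1-\theta(s)^2)\,ds$. You should resolve this discrepancy explicitly (e.g.\ by testing both candidate drifts against the exact series of Proposition~\ref{clusters hypergraph} for random $3$-cycles) before declaring the lemma proved.
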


\begin{proof}
Let $H$ denote a hypergraph on $\{1,\ldots, n\}$, and let $h=h_1\cup \ldots\cup h_\ell$ be a set of hyperedges. Denote by $H'=H+h$ the graph obtained from $H$ by adding the hyperedges $h_1, \ldots, h_\ell$ to $H$.
Let $(x_1, \ldots, x_n)$ be a discrete partition of unity, i.e., a non-increasing sequence of numbers such that $\sum_{i=1}^n x_i =1$ and $ nx_i$ is a nonnegative integer. Define a function $f(x_1, \ldots , x_n)$ as follows. Let $H$ be any hypergraph for which $x_i$ are the normalized cluster sizes. Let $h=h_1 \cup \ldots \cup h_\ell$ be a collection of hyperedges of sizes $2,3, \ldots, J$ (with size $j$ being of multiplicity $k_j$), where the hyperedges $h_i$ are sampled uniformly at random without replacement from $\{1, \ldots, n\}$. Let $H'=H+h$. Then we define $f$ by putting
$$
f(x_1,\ldots,x_n):= \E( |H'| - |H|)
$$
where $|H|$ denotes the number of clusters of $H$. Then we have that
\begin{equation}
M_t:= \frac1n|H(tn)|- \int_0^t f(x_1(sn),\ldots, x_n(sn)) ds
\end{equation}
is a martingale, if $(x_1(s), \ldots, x_n(sn))$ denote the ordered normalized cluster sizes of $H(s)$. (Note that $M_0=1$.) Thus, taking expectations,
$$
u_n(t)=1+ \int_0^t\E[ f(x_1(sn),\ldots, x_n(sn))]ds
$$
We claim that, as $n\to \infty$, for every $s$ fixed,
\begin{equation}\label{gain}
\E(f(x_1(sn), \ldots, x_n(sn)) \to - K(1-\theta(s)^2).
\end{equation}
where $K = \sum_{j=2}^J k_j(j-1)$. To see this, note that for every hyperedge $h=\{i_1, \ldots, i_j\}$ of size $2 \le j \le J$ which is added to the graph, the increase in the number of clusters is the same as if we successively add the edges $\{i_1,i_2\}, \ldots \{i_{j-1}, i_j\}$. Let us compute the expected gain when adding the edge $\{i_{k-1},i_k\}$. Summing over $k$ gives us the expected gain after adding the edge $h$ by linearity of the expectation, and summing over hyperedges will give us the value of $f$. Now, condition on what happens by the time we add the edge $\{i_{k-1},i_k\}$. If the cluster sizes are $(x_1, \ldots)$, then, either $i_k$ falls into the same component as $i_{k-1}$, in which case the number of components does not change, or $i_k$ falls in a different component, in which case, the number of clusters decreases by 1. Hence, the expected gain at this stage is
$$
\sum_{i\ge 1} x_i[-(1-x_i)] = -1 + \sum_{i\ge 1} x_i^2.
$$
As $n \to \infty$, by Lemma \ref{L:giant}, this converges to $-1 + \theta(s)^2$. (Note in particular that this limit is independent from what happened during the earlier edges added to $H$). Since there are $(j-1)$ edges to add for a hyperedge of size $j$ and $k_j$ such hyperedges, (\ref{gain}) follows. Using the Lebesgue convergence theorem, we deduce that,
$$
u_n(t) \to 1 - K\int_0^t (1-\theta^2(s))ds = u(t).
$$
To obtain convergence in the u.c.p. sense (uniform on compacts in probability), we note that
\begin{equation}\label{variance}
\var(|H'|-|H|) \le C
\end{equation}
for some constant $C$ which depends only on $(k_2, \ldots, k_\ell)$, since $|H'|$ may differ from $|H|$ only by a bounded amount. Now, by Doob's inequality, if $\bar M_s = n(M_s-1)$:
\begin{align}
\P\left(\sup_{s\le t} |(M_s-1)| > \eps\right) & = \P\left( \sup_{s\le t} |\bar M_s|^2 >n^2 \eps^2\right) \nonumber \\
& \le \frac{4\var(\bar M_t)}{n^2\eps^2} \nonumber \\
& \le \frac{4C}{n\eps^2}.\label{doob}
\end{align}
The last line inequality is obtained by conditioning on the number of steps $N$ between times 0 and $tn$, noting that  after each step, the variance of $\bar M_t$ increases by at most $C$ by (\ref{variance}). Hence, to conclude the proof of Lemma \ref{L:clusters}, it suffices to show that we have the convergence:
\begin{equation}
\label{gain2}
\int_0^t f(x_1(sn),\ldots) ds \longrightarrow  -K \int_0^t (1-\theta(s)^2) ds,\ \ u.c.p.
\end{equation}
\end{proof}
This is a direct consequence of the fact that as $n\to \infty$:
$$
f(x_1(sn),\ldots) \longrightarrow  -K(1-\theta(s)^2) , \ \ u.c.p.
$$
which itself follows from pointwise convergence in probability, monotonicity in $s$, and the fact that the limiting function is continuous. (Monotonicity comes from a simple coupling argument, using the fact that $H(t)$ is a purely coalescing process).

\subsection{Random walk estimates}

\begin{lemma} \label{L:clusters2} Let $N(t)$ be the number of cycles of $\sigma(tn)$. Then we have, as $n\to \infty$:
\begin{equation}\label{clusters2}
\frac1{n^{3/4}} (N(tn) - \bar N(tn)) \longrightarrow 0, \ \ u.c.p.
\end{equation}
\end{lemma}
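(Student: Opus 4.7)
The plan is to mirror the argument of Lemma~\ref{L:ncSigma} and establish $\E[\sup_{s \le tn}(N(s) - \bar N(s))] = O(\sqrt n)$, whence Markov's inequality yields the claimed u.c.p.\ convergence after rescaling by $n^{3/4}$. The pointwise inequality $N(s) \ge \bar N(s)$ holds because every cycle of $\sigma_s = \gamma_1 \cdots \gamma_{N_s}$ is an orbit of $\sigma_s$ on $V$, hence contained in an orbit of the group $\langle \gamma_1, \ldots, \gamma_{N_s}\rangle$, which by construction of the coupling is a cluster of $H_s$. Consequently $N(s) - \bar N(s) = \sum_C ((\text{\# cycles in } C) - 1) \ge 0$, where $C$ ranges over clusters of $H_s$.

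I would then split the excess by cycle size. At most $\sqrt n$ cycles can have size greater than $\sqrt n$, so
$$
N(s) - \bar N(s) \le \sqrt n + N^{ex}_\downarrow(s),
$$
where $N^{ex}_\downarrow(s)$ is the number of excess cycles of $\sigma_s$ of size $\le \sqrt n$. Each such small excess cycle must be \emph{born} at some step $m \ge 1$: a singleton cycle $\{v\}$ surviving unchanged since $\sigma_0$ would correspond to an isolated vertex of $H_s$ and hence would not be excess, so every excess small cycle has a smallest index $m \ge 1$ at which it is a cycle of $\sigma_m$. At that index it appears among the cycles of $\gamma_m\sigma_{m-1}$ that meet $\text{Supp}(\gamma_m)$ and have size $\le \sqrt n$; call such an $m$ a \emph{small-creation step} and write $F_\downarrow(s)$ for their count. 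Since at most $|\Gamma|$ cycles are created per step, $N^{ex}_\downarrow(s) \le |\Gamma|\, F_\downarrow(s)$.

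The crux is to show that, conditionally on $\sigma_{m-1}$ and uniformly in it, the probability that step $m$ is a small-creation step is $O(1/\sqrt n)$. Any cycle of $\gamma_m \sigma_{m-1}$ meeting $\text{Supp}(\gamma_m)$ is traced by alternating $\sigma_{m-1}$-arcs between successive visits to support points with jumps prescribed by $\gamma_m$, and its length equals the sum of its arc lengths. Hence a new cycle of size $\le \sqrt n$ forces at least one $\sigma_{m-1}$-arc between two support points to have length $\le \sqrt n$, i.e.\ two of the uniformly chosen support points must lie within $\sqrt n$ forward $\sigma_{m-1}$-steps of one another. Fixing a support point $x_i$, at most $\sqrt n$ vertices satisfy this constraint, so the probability that some other uniformly chosen support point falls in this window is $O(|\Gamma|/\sqrt n)$; a union bound over the $|\Gamma|$ choices of $x_i$ preserves the $O(1/\sqrt n)$ order. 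Granting this estimate, linearity gives $\E[F_\downarrow(tn)] = O(t\sqrt n)$, and the lemma follows.

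The main obstacle is this last probability bound. For transpositions it reduces to the clean counting of Lemma~\ref{L:ncSigma} (given $i$, only $2\sqrt n$ choices of $j$ cut off a short piece). For general $\Gamma$ the cycles of $\gamma_m \sigma_{m-1}$ have a more intricate arc-jump structure involving several support points simultaneously, and one must verify carefully that no such configuration produces a short new cycle without forcing two support points to lie close along a common $\sigma_{m-1}$-orbit. This combinatorial verification is precisely where the arbitrary conjugacy class case is genuinely more delicate than the transposition case.
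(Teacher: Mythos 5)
Your reduction to ``small-creation steps'' is where the argument breaks. You define a small-creation step as any step $m$ at which some cycle of $\sigma_m$ of size $\le \sqrt n$ meeting $\mathrm{Supp}(\gamma_m)$ first appears, and then claim that conditionally the probability of such a step is $O(n^{-1/2})$. This is false: a coalescence of two small cycles is also a small-creation step. For instance, at step $m=1$ (or indeed throughout the sub-critical regime), $\gamma_m$ merges a handful of singletons into cycles of bounded size, so $F_\downarrow(tn)$ as you have defined it is $\Theta(n)$, not $O(\sqrt n)$, and the Markov bound you want does not follow. Your ``arc'' argument implicitly acknowledges this: when two singletons $x_i, x_{i+1}$ are merged by a transposition, every $\sigma_{m-1}$-arc has length $0$ or $1$, so the hypothesis ``some arc is short'' is satisfied, yet no two \emph{distinct} support points lie close along a common $\sigma_{m-1}$-orbit. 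So the implication ``new small cycle $\Rightarrow$ two distinct support points within $\sqrt n$ forward $\sigma_{m-1}$-steps'' is simply not true. The correct bound must restrict to \emph{fragmentation} events: coalescences can only preserve or decrease the excess $N(s)-\bar N(s)$, while fragmentations increase it by one, so the excess is bounded by the number of fragmentations producing a small piece, not by the number of small cycles ever created.

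Once you make that restriction, the paper disposes of the combinatorial difficulty you flag at the end with one short observation you do not make: decompose each step $\gamma_m$ into $K=\sum_j (j-1)k_j$ transpositions in the natural way (write each cycle $(y_1,\ldots,y_\ell)$ of $\gamma_m$ as $(y_1,y_2)(y_2,y_3)\cdots(y_{\ell-1},y_\ell)$), so that the whole walk becomes a product $\sigma_{tn} = \prod_{i\le m_t}\tau_i$ of \emph{dependent} transpositions, with the auxiliary graph built by adding one ordinary edge per $\tau_i$. The excess is again non-decreasing only at fragmentations of this interpolated chain, and the point is that \emph{conditionally on $\tau_1,\ldots,\tau_{i-1}$}, the second endpoint of $\tau_i$ is still uniformly distributed outside the $O(1)$ already-revealed support points; hence, exactly as in Lemma~\ref{L:ncSigma}, the conditional probability that $\tau_i$ cuts off a piece of size $\le\sqrt n$ is at most $4n^{-1/2}$. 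This gives $\E[\sup_{s\le tn} F_\downarrow(s)]\le 4Kt\sqrt n$ with $F_\downarrow$ the number of small transposition-level fragmentations, and the lemma follows by Markov. Your direct analysis of the arc-jump structure of $\gamma_m\sigma_{m-1}$ is unnecessary and, as you note yourself, delicate; the transposition decomposition avoids it entirely. (As a secondary point, defining the birth of a cycle as the \emph{smallest} $m$ at which it is a cycle of $\sigma_m$ is also problematic: a singleton $\{v\}$ can be in $\mathrm{Supp}(\gamma_m)$ yet remain fixed by $\sigma_m$, so it is a cycle of $\sigma_{m'}$ for all $m'\ge 0$ including $m'=0$ and your argument that the birth index is $\ge 1$ fails; you would want the last index at which the cycle was created. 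The transposition decomposition lets you bypass this bookkeeping too.)
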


\begin{proof}
This is very similar to Lemma \ref{L:ncSigma}. Say that a cycle is large or small, depending on whether it is bigger or smaller than $\sqrt{n}$. To start with, observe that there can never be more than $\sqrt{n}$ large cycles. As usual, we have that $N(t) \ge \bar N(t)$, and we let $N^{ex}(t) = N(t) - \bar N(t)$ be the excess number of cycles. This in turn can be decomposed as $N^{ex}(t) = N^{ex}_{\uparrow}(t) + N^{ex}_{\downarrow}(t)$, where the subscripts $\uparrow$ and $\downarrow$ refer to the fact that the cycles are either small or large. Thus we have
$$
N^{ex}_{\uparrow}(t) \le \sqrt{n},
$$
and the problem is to control $N^{ex}_\downarrow(t)$. Writing every cycle of size $j$ as a product of $j-1$ transpositions, we may thus write $\sigma_t = \prod_{i=1}^{m_t} \tau_i$, for a sequence of transpositions having a certain distribution (they are not independent). Then $N^{ex}_\downarrow(t) \le F_\downarrow(t)$, where $F_\downarrow(t)$ is the number of times $1  \le i \le m$ that the transpositions $\tau_i$ yields a fragmentation event for which one of the fragments is small. However, conditionally on $\tau_1, \ldots, \tau_{i-1}$, the conditional probability that $\tau_i$ yields such a fragmentation is still bounded by $4n^{-1/2}$. Since $m_t = K N_t$, where $K = \sum_{j=2}^J (j-1) k_j \ge 1$ and $N_t$ is a Poisson random variable with mean $t$, it follows that
$$
\E( \sup_{s\le tn} F_\downarrow(s) ) \le 4Kt\sqrt{n}
$$
Thus by Markov's inequality,
\begin{equation}\label{frag}
\P\left(\sup_{s\le tn} F_{\downarrow}(s) > n^{3/4}\right) \longrightarrow 0.
\end{equation}
Hence,
$
n^{-3/4}| N(tn) - \bar N(tn) |
$
converges to 0 u.c.p, which concludes the proof by Lemma \ref{L:clusters2}.
\end{proof}

Note in particular that by combining Lemma \ref{L:clusters} with Lemma \ref{L:clusters2}, we get that
\begin{equation}\label{clusters3}
\frac1n N(tn) \to u(t),\ \ \ u.c.p.
\end{equation}

\begin{lemma}\label{L:large}
Let $t>t_c$. Then $\tau_\delta < tn$ with high probability, where
\begin{equation}\label{largest}
\delta:=\frac{2^K}t \int_0^t \theta^2(s)ds >0,
\end{equation}
where $K = \sum_{j=2}^J (j-1) k_j$.
\end{lemma}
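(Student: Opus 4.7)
The plan is to argue by contradiction: if no cycle of size exceeding $\delta n$ has formed by time $tn$, then there cannot be enough fragmentation events to match the cycle-count deficit predicted by \eqref{clusters3}. I would decompose each walk step $\gamma \in \Gamma$ as a product of $K = \sum_{j=2}^J(j-1)k_j$ transpositions (as in the proof of Proposition~\ref{bounds distance}), and write $F$ and $C$ for the total numbers of these transpositions that are respectively fragmentations and coalescences during $[0,tn]$. Counting transpositions gives $C+F = K\cdot \mathrm{Poisson}(tn)$, while telescoping the unit changes of $N$ along the walk gives $C-F = n - N(tn)$. Combining the Poisson law of large numbers with \eqref{clusters3} (so that $N(tn)/n \to u(t) = 1 - K\int_0^t(1-\theta^2(s))\,ds$) yields
$$
\frac{F}{n} \longrightarrow \frac{K}{2}\int_0^t \theta^2(s)\,ds \qquad \text{in probability,}
$$
a strictly positive quantity for $t>t_c$ by Lemma~\ref{L:giant}.

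Next, I would derive an incompatible upper bound on $F$ on the event $B=\{\tau_\delta\ge tn\}$. Each transposition appearing in the decomposition of a walk step has, marginally, a uniform pair of distinct markers in $V$, so conditional on the current intermediate permutation $\sigma'$ its fragmentation probability is bounded by $\sum_c|c|^2/(n(n-1))\le \Lambda(\sigma')/(n-K)$, where $\Lambda(\sigma')$ is the maximum cycle length of $\sigma'$. On $B$ we have $\Lambda\le \delta n$ between walk steps; within a single walk step, each of the $K$ intra-step coalescences can combine the running maximum with one further cycle of size $\le\delta n$, so that $\Lambda(\sigma')\le c_K\,\delta n$ throughout the step, for a constant $c_K$ depending only on $(k_2,\ldots,k_J)$. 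Summing over the $K$ transpositions of a walk step and over the $\approx tn$ walk steps gives $\E\bigl[F\,\indic{B}\bigr]\le c'_K\,\delta\, tn$, and a parallel variance estimate (using that $F\,\indic{B}$ is a sum of conditional Bernoullis of the same order of expectation) combined with Chebyshev's inequality yields $F\,\indic{B}/n \le c'_K\delta t+o(1)$ with high probability.

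Choosing $\delta$ as in \eqref{largest} (the $2^K$ factor in the stated $\delta$ being a crude but convenient way of absorbing $c'_K$) makes $c'_K\delta t$ strictly smaller than $\tfrac{K}{2}\int_0^t\theta^2(s)\,ds$. Comparing with the u.c.p.\ lower bound on $F/n$ derived above, this forces $\P(B)\to 0$, which is the conclusion. The main obstacle I expect in carrying out this plan is the intra-step cycle growth: within a single walk step, cycles can transiently exceed $\delta n$ (produced by successive coalescences) even though the end-of-step configuration satisfies the bound, and this is precisely what produces the $K$-dependent constant in the admissible $\delta$. I would handle it by the deterministic inductive bound above on the running maximum through the $K$ transpositions of the decomposition, together with the marginal uniformity of the markers of each individual transposition, which is all that the elementary $\Lambda/n$ estimate on the conditional fragmentation probability requires.
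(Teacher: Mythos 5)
Your proposal is correct and is essentially the paper's argument, differently packaged: where the paper writes $N(tn)/n$ as $1+\int_0^t g\,ds$ plus a martingale, bounds the drift $g\le K(-1+2^Kx^*)$ transposition by transposition, and applies Doob's inequality, you instead telescope $C-F=n-N(tn)$ and $C+F=K\cdot\mathrm{Poisson}(tn)$ to extract $F/n\to\tfrac{K}{2}\int_0^t\theta^2$ directly from \eqref{clusters3}, then contradict it on $\{\tau_\delta\ge tn\}$ -- this mirrors the proof of Theorem~\ref{T:perctransp} from Section~2 rather than the paper's martingale formulation of this lemma, but the two are the same estimate. The core inputs are identical in both: decompose each walk step into $K$ transpositions, control the intra-step running maximum (your ``one further cycle of size $\le\delta n$'' heuristic isn't literally right since two enlarged intermediate cycles can merge, but the worst-case doubling bound $\Lambda(\sigma')\le 2^K\delta n$ closes the gap), and compare the fragmentation rate per transposition to the slope of $u(t)$. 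One small note: working the constants through gives $\delta$ on the order of $2^{-K}t^{-1}\int_0^t\theta^2$, not $2^{K}t^{-1}\int_0^t\theta^2$; the displayed \eqref{largest} in the paper appears to have the sign of the exponent flipped, which is harmless since only $\delta>0$ is needed.
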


\begin{rmk}
Note that Lemma \ref{L:large} immediately implies Theorem \ref{T:perc2}.
\end{rmk}

\begin{proof}
The idea is to say that, since we know that the number of cycles is approximately the number of clusters in the random graphs, this implies a nonlinearity in the behaviour of this number. In turns, this means there are many fragmentations and thus that there are some large clusters.

To formalize this, assume that a permutation $\sigma$ has a cycle structure $(C_1, \ldots, C_r)$ and that $x_1, \ldots, x_r$ are the normalized cycle sizes, i.e., $x_i = |C_i| /n$. Define a function $g(x_1, \ldots, x_r)$ by putting
$$
g(x_1, \ldots, x_r):= \E( |\sigma'| - |\sigma|),
$$
where $\sigma' = \sigma \cdot \gamma$ and $\gamma$ is a uniform random element from $\Gamma$, while $|\sigma|$ denotes the number of cycles of $\sigma$. Then if we define a process
$$
M'_t = \frac1n N(tn) - \int_0^t g(x_1(sn), \ldots) ds,
$$
then $(M'_t, t \ge 0)$ is a martingale started from $M'_0=1$. Moreover, writing $\tau= \tau_1 \cdot \ldots \cdot \tau_K$, where $\tau_i$ are transpositions, and if we let $\sigma_i = \sigma \cdot \tau_1 \ldots \tau_i$, so that $\sigma_0 = \sigma$ and $\sigma_K= \sigma'$, then
$$
g(x_1, \ldots, x_r) = \sum_{i=1}^K \E(|\sigma_i| -|\sigma_{i-1}|).
$$
Recall that the transposition $\tau_i$ can only cause a coalescence or a fragmentation, in which case the number of cycles decreases or increases by 1. If the relative cycle sizes of $\sigma_{i-1}$ are given by $(y_1, \ldots, y_r)$, it follows that
$$
-1 \le \E(|\sigma_i| -|\sigma_{i-1}|) \le -1 + 2 y^*_i \frac{n}{n-i+1},
$$
where $y^*_i = \max (y_1, \ldots, y_r)$. Moreover, $y^*_i \le 2^{i} y^*_0$.

From this we obtain directly that with high probability (uniformly on compact sets)
\begin{equation}\label{lb}
\int_0^t g(x_1(sn), \ldots) ds \le \int_0^t K\left[-1 + 2^K  x^*(sn) \right]ds,
\end{equation}
where $x^*(s) = \max(x_1(s), \ldots, x_r(s))$. On the other hand, using Doob's inequality in the same way as (\ref{doob}), we also have:
\begin{equation}\label{doob2}
\P\left(\sup_{s\le t} |(M'_s-1)| > \eps\right)
 \le \frac{4C}{n\eps^2}.
\end{equation}
Combining this information with (\ref{clusters3}), we obtain, with high probability uniformly on compact sets:
\begin{equation}
\int_0^t\left[-1 + 2^K x^*(sn) \right]ds \ge \int_0^t -1+\theta^2(s)ds.
\end{equation}
From this we get, since $\sum_{i=1}^r x_i(sn)^2 \le  \Lambda_n(t)$, with high probability
\begin{equation}
t2^K \sup_{s\le tn}x^*(s) \ge \int_0^t \theta^2(s)ds,
\end{equation}
i.e., $\tau_\delta \le tn$.
\end{proof}

\subsection{Distance estimates}

We are now ready to prove that
$$
d(\sigma_{tn}) \longrightarrow \varphi(t),
$$
uniformly on compact sets in probability as $n\to \infty$ except possibly on some interval compact $I$ in $(t_c, \infty)$,
where
\begin{equation}\label{Def:phi}
\varphi(t) = \frac{1- u(t)}K = \int_0^t 1-\theta(s)^2 ds.
\end{equation}
The proof is analogous but more complicated than that of Proposition \ref{bounds distance}. Note that if $\sigma$ is a permutation, every transposition can at most increase the number of cycles by 1. Hence if $\sigma$ has $N(\sigma)$ cycles, after one step $s\in \Gamma$, $\sigma$ has at most $N(\sigma)+ K$ cycles. Thus after $p$ steps, the number of cycles of $\sigma$ is at most $N(\sigma) + K p$. Since the identity permutation has exactly $n$ cycles, we conclude that
\begin{equation}\label{lbdistance}
d(\sigma) \ge \frac1K(n- N(\sigma)).
\end{equation}
Together with Lemma \ref{L:clusters2} and the definition of $\varphi(t)$, this proves the lower bound in Theorem \ref{3-cycle}.

Note that this bound would be sharp if we can find a path to the identity which makes a fragmentation at each step. We now work our way towards the upper-bound, which shows that indeed such a path may be found except that we may have to add an additional $o(n)$ coagulation steps. Call a component of $H_t$ \emph{good} if it is a hypertree and bad otherwise; a hyperedge is good if its component is good. Likewise, call a cycle $C$ of $\sigma(t)$ good if its associated component $\bar C$ in $H_t$ is a hypertree. Therefore, a good cycle is one which has never been involved in fragmentations, i.e., its history consists only of coagulation events. Fix $t>0$ and write $\sigma(tn) = \sigma^g \cdot \sigma^b$, where $\sigma^g$ is the product of all good cycles of $\sigma(tn)$ while $\sigma^b$ is the product of all bad cycles. Thus
$$
\sigma^g = c^g_1 \ldots c^g_{r(g)}, \ \ \sigma^b = c^b_1 \ldots c^b_{r(b)}
$$
Note that by (\ref{frag}), and recalling that there can never be more than $\sqrt{n}$ cycles greater or equal to $\sqrt{n}$, we have $r(b) \le n^{3/4}$ say, and the total mass of cycles in $\sigma^b$ is
\begin{equation}\label{massbad}
\frac{|\sigma^b|}n = \theta(t) + o(1),
\end{equation}
where $o(1)$ stands for a term that converges to 0 in probability, u.c.p. Assume for simplicity that $\g$ is an odd conjugacy class that generates all of $\cS_n$ (the arguments below can easily be adapted otherwise). To start with, note that in less than $o(n)$ moves, we can transform $\sigma(tn)$ into $\sigma'$ where all the cycles $c^b_1, \ldots, c^b_{r(b)}$ have been coagulated to form one large bad cycle, leaving the good cycles unchanged. Thus $\sigma'= \sigma^g\cdot \sigma'^b$, where $\sigma'^b$ has only one nontrivial cycle, whose size is $|\sigma^b|$. By the triangle inequality, it then suffices to find a path between $\sigma'$ and the identity of length approximately given by \eqref{lbdistance}.

Roughly speaking, our strategy for constructing a path between $\sigma'$ and the identity using steps from the conjugacy class $\g$ is to systematically destroy every good cycles as much as possible before destroying the bad cycles, as the good cycles are slightly harder to destroy than the bad cycles. Indeed, consider a cycle $C$ such that $|C| > \gg$. Then note that applying a judicious permutation $s \in \g$ to $C$ we can transform $C$ into $C'$ where the elements of $C \setminus C'$ are now fixed points, and $|C'| = |C| - K$.
Therefore, for an arbitrary cycle $C$, we get that
\begin{equation}\label{destroy}
C \text{ can be destroyed in at most $\frac{|C|}K + O(1)$ steps,}
\end{equation}
where the term $O(1)$ is nonrandom, uniformly bounded in $C$ and $n$. This bound is useful for the large bad cycle that makes up $\sigma'$, but does not help for small (good) cycles, of which there are of order $n$.

However, if $C$ is a good cycle and $e_1, \ldots, e_j$ are the hyperedges associated with the component of $C$ in $G(tn)$ (corresponding to the application of certain cycles as part of a step prior to time $tn$, say $\gamma_1, \ldots, \gamma_j$, which we will call the \emph{subcycles} of $C$), then $C$ can be destroyed by applying successively $j$ random cycles $\gamma'_1, \ldots, \gamma'_j$ of respective length $|e_1|, \ldots, |e_j|$, in some specified order. Unfortunately, it may not always be possible to perform exactly the sequence $\gamma'_1, \ldots, \gamma'_j$ as there are some arithmetic constraints on the sizes of the cycles that can be performed (indeed, each application of a cycle must be a part of the application of a permutation $s \in \g$). A problem may thus arise because, among good components the smaller hyperedges tend to be over-represented. This is made precise by the next lemma.

\begin{lemma}
 \label{L:edgecount} Fix $t>0$. Let $j\ge 2$ such that $k_j>0$. Then the number $ U_j(tn)$ of good hyperedges of size $j$ in $H_{tn}$, satisfies
 \begin{equation}\label{massj}
 \frac{U_j(tn)}{n} \to_p k_j t (1-\theta(t))^j
 \end{equation}
\end{lemma}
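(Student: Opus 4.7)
The plan is to compute $\E[U_j(tn)]/n$ directly, using an approximate-independence argument for the fate of the $j$ vertices of a typical size-$j$ hyperedge, and then to establish convergence in probability by a second-moment bound.

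First I would note that the total number $T_j(tn)$ of size-$j$ hyperedges created by time $tn$ equals $k_j N_{tn}$, where $N_{tn}\sim\text{Poisson}(tn)$ is the number of steps of the random walk, so $T_j(tn)/n \to k_j t$ in probability by the law of large numbers. Next, by Lemma \ref{L:giant} combined with the remark following Theorem \ref{KL} (adapted to the non-regular case), the non-giant non-tree (unicyclic or complex) components of $H_{tn}$ collectively contain $O(\log^2 n)$ vertices, and hence only $o(n)$ hyperedges of any given size. It thus suffices to show that $G_j(tn)/n \to k_j t[1-(1-\theta(t))^j]$, where $G_j(tn)$ counts size-$j$ hyperedges lying inside the giant (taken to be empty if $t\le t_c$).

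The heart of the proof will be to show that $\P(h\text{ is good at time }tn)\to(1-\theta(t))^j$ for a typical size-$j$ hyperedge $h=\{v_1,\ldots,v_j\}$. Since $h$ comes from a uniform cycle in $\g$, its vertices are uniform in $V$. Let $H' := H_{tn}\setminus\{h\}$ and let $C_i$ be the component of $v_i$ in $H'$; then the component of $h$ in $H_{tn}$ is $(\bigcup_i C_i)\cup\{h\}$, and a short excess calculation shows that it is a hypertree if and only if the $C_i$ are pairwise distinct hypertrees: indeed, $j$ disjoint hypertrees joined by $h$ give total excess $-j+(j-1)=-1$, whereas any coincidence $C_i=C_{i'}$ or any complex $C_i$ raises the excess to $\ge 0$. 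The probability that two of the $v_i$ share a small component is $O(\log n /n)=o(1)$, and marginally each $v_i$ lies in a hypertree component with probability $1-\theta(t)+o(1)$ by Lemma \ref{L:giant}.

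The main obstacle is justifying the asymptotic independence of the $j$ events ``$v_i$ lies in a hypertree component''. I would obtain this via local weak convergence of $H_{tn}$ (rooted at a uniform vertex) to the associated multi-type Galton--Watson branching tree, as is implicit in the adaptation of Theorem 3.2.2 of \cite{durrett-rg} invoked in Lemma \ref{L:giant}; alternatively, by a direct exploration/sprinkling argument in which the $C_i$ are revealed sequentially, each revealed subset being of size $O(\log n)$ and hence a negligible perturbation of the remaining random hypergraph. Combined with the law of large numbers for $T_j(tn)$, this yields $\E[U_j(tn)]/n \to k_j t(1-\theta(t))^j$. Convergence in probability then follows from a second-moment estimate: two size-$j$ hyperedges have approximately independent fates unless their local neighborhoods overlap, a negligible event, giving $\var(U_j(tn))=o(n^2)$, and Chebyshev's inequality closes the argument.
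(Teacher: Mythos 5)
Your proposal is correct and takes essentially the same route as the paper: compute $\E[U_j(tn)]$ by showing that a fixed size-$j$ hyperedge lies outside the giant (hence, up to $o(n)$ corrections, in a hypertree) with probability $\to(1-\theta(t))^j$ via a local exploration revealing only $O(\log n)$ vertices, and then conclude by the second-moment method. You are slightly more explicit than the paper in separating out the negligible unicyclic/complex small components and in carrying out the excess bookkeeping, but the substance of the argument is identical.
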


\begin{proof}
 The number of $j$-edges that have been added to $G(tn)$ is a Poisson random variable with mean $tnk_j$. For each such edge, the probability that it is not in the giant component $W$converges to $(1-\theta(t))^j$. [To see this, note that by Lemma \ref{L:giant}, it suffices to check that none of the $j$ points are in a cluster of size greater than $\beta \log n$ for $\beta>0$ large enough. This involves checking a neighbourhood of these $j$ points so that no more than $j \beta \log n$ vertices' connections are revealed. Since this is much smaller than the $n^{1/2}$ neighbourhood size of the birthday problem The probability that the exploration
  Thus
 $\E(U_j(tn)) \sim tn k_j (1-\theta)^j$. while if $e$ and $e'$ are two randomly chosen $j$-edges,
 $\P( e \subset W, e' \subset W)$ converges for the same reasons to $(1-\theta(t))^{2j}$, so that $\cov(\indic{e \subset W}, \indic{e' \subset W}) \to 0$. Thus the lemma follows from the second moment method.
\end{proof}

Recall that $J$ is the maximal size of a cycle for a permutation $s \in \g$, so that the subcycles of size $J$ are the most under-represented among good cycles. Consider the path that leads from $\sigma_J:=\sigma'$ to $\sigma_{J-1}$ in $d_{J} = U_J(tn)/k_J$ steps, where $\sigma_{J-1}$ is the permutation obtained by destroying from $\sigma_J$ all the subcycles of size $J$ from all good cycles and completing each step by removing $k_j$ subcycles of size $j$ for $2\le j \le J-1$ among good cycles. At this point we may write $\sigma_{J-1} = \sigma^g_{J-1} \cdot \sigma'^b_{J-1}$, where $\sigma^b_{J-1} = \sigma'^b$ (so the bad part is unchanged) and $\sigma^g_{J-1}$ is the same as $\sigma^g$ but all subcycles of size $J$ have been destroyed.

If $\g$ consists only of $k$-cycles, then the estimate (\ref{destroy}) with (\ref{massbad}) finishes the proof of the theorem in that case. Else,  we still call the cycles of $\sigma^g_{j-1}$ good, and note that they may still be decomposed in subcycles of size $j\le J-1$.
We similarly construct inductively $\sigma_{J-2}, \ldots, \sigma_{1}$, where $\sigma_{j-1}$ is obtained from $\sigma_j$ by removing from it all good subcycles of size $j$. Each time a step $s= c_1\ldots c_{L}$ is performed, where $L= \sum_{\ell=2}^J k_\ell$, we take $c_\ell$ from the good subcycles of $\sigma^g_j$ if $\ell \le j$, while we use for $c_\ell$ vertices from  $\sigma^b_j$. This construction is possible so long as $\sigma^b_j$ does not ``run out of mass". However, by Lemma \ref{L:edgecount}, for every $\eps>0$ with high probability the total mass that is required from bad cycles in this procedure is no more than
$$
M = \sum_{j=2}^J j k_j (1+\eps) t n [(1-\theta(t))^2 - (1-\theta(t))^j],
$$
since $J$ is uniformly bounded in $n$. Thus if
$$
M \le \theta(t) n
$$
which is the initial mass of bad cycles (i.e., the mass of $\sigma_J^b  = \sigma'^b$), then the upper-bound (and hence the result) follows from \eqref{destroy} and \eqref{massbad}. Indeed, in that case, we have constructed a path to the identity where the only coagulations are made when going from $\sigma(tn)$ to $\sigma'$ and potentially when finishing to destroy the bad cycle $\sigma^b_{1}$. In any case that accounts for no more than $o(n)$ such coagulations (with high probability). Referring to the remark under \eqref{lbdistance}, it follows that this path has a length of no more than
$$\frac1K (n - N(\sigma)) + o(n) = \varphi(t)n + o(n).$$
It thus remains solely to prove that $M < \theta n$ with high probability if $t>t_c$ is sufficiently close to $t_c$.
However, using that $1- (1-x)^\alpha < \alpha x$ if $\alpha \ge 1$ and $0<x<1$, we see that for all $t> t_c$
\begin{equation}
\frac{M}{\theta n}  \le (1+\eps) t(1-\theta)^2 \sum_{j=2}^J k_j j(j-2)
\label{Mdom}\end{equation}
Thus it suffices to prove that the right-hand side is strictly smaller than 1 if $t$ is sufficiently close to $t_c$ or if $t$ is sufficiently large. When $t \to t_c = (\sum_{j=2}^J j(j-1)k_j)^{-1}$, then this is easily verified, at least provided that $\g$ does not consist solely of $2$-cycles, in which case the result is already known. In the case $t \to \infty$, this comes from the fact that there exists $c>0$ such that for $t$ large enough
$$
1- \theta(t) \le e^{-ct}.
$$
In turn, this follows from the fact that $\theta(t)$ is the survival probability of a Galton-Watson process where the offspring distribution is \eqref{progeny} and can thus be bounded below stochastically by a Poisson random variable with mean $t$. This finishes the proof of the result.

\begin{rmk}
Based on numerical methods in several particular cases, we expect that the inequality $M \le \theta n$ holds in general (i.e., for all $t\ge t_c$ and all conjugacy class $\Gamma$ of finite length). This would imply in particular that the limiting result for the behaviour of the distance $d(t)$ should hold for all $t\ge 0$. In fact, in all the examples that we have looked at, the function $M/\theta$ appears to be monotone decreasing for $t> t_c$.

However the upper-bound \eqref{Mdom} is too crude for this, as it can be shown that the right-hand side does not have to be monotone and is in fact strictly greater than 1 for some values of $t>t_c$ provided that $\Gamma$ contains cycles of size large enough.
\end{rmk}

\subsection*{Acknowledgements}
This paper owes much to the kind support of Oded Schramm, who encouraged me to write this result down during a visit I made to him in August 2008. I am very grateful to him and the Theory Group at Microsoft Research for their invitation during that time.

\end{document}